\newtheorem{thm}{Theorem}[section]
\newtheorem{lemma}[thm]{Lemma}
\newtheorem{define}{Definition}
\newtheorem{prop}[thm]{Proposition}
\newtheorem{cor}[thm]{Corollary}%
\theoremstyle{definition}
\newtheorem{remark}[thm]{Remark}
\newtheorem{ex}[thm]{Example}
\DeclareMathOperator{\ind}{ind} \DeclareMathOperator{\indx}{Ind}
\DeclareMathOperator{\Ind}{Ind} \DeclareMathOperator{\ch}{ch}
\DeclareMathOperator{\sch}{sch} \DeclareMathOperator{\Sch}{Sch}
\DeclareMathOperator{\rk}{rk} \DeclareMathOperator{\coker}{coker}
\DeclareMathOperator{\Diff}{Diff}
\DeclareMathOperator{\F}{\mathbf{\mathcal F}}
\newcommand{\eqn}{\begin{equation}}
\newcommand{\eeqn}{\end{equation}}
\newcommand{\R}{\mathbb{R}}
\newcommand{\C}{\mathbb{C}}
\newcommand{\Z}{\mathbb{Z}}
\newcommand{\Q}{\mathbb{Q}}
\newcommand{\g}{\mathfrak{g}}
\newcommand{\dd}{\slashed {\partial}}
\newcommand{\DD}{\mathcal{\slashed{D}}}
\newcommand{\M}{\mathcal{M}}
\newcommand{\MM}{\widetilde{\mathcal{M}}}
\newcommand{\J}{\mathcal{J}}
\begin{document}
\title{The Index Bundle for a Family of Dirac-Ramond Operators}
\author{Chris Harris\footnote{\texttt{charris@math.miami.edu}}}
\date{}
\maketitle

\begin{abstract}
\noindent We study the index bundle of the Dirac-Ramond operator
associated with a family $\pi: Z \to X$ of compact spin manifolds.
We view this operator as the formal twisted Dirac operator $\dd
\otimes \bigotimes_{n=1}^{\infty}S_{q^n}TM_{\C}$ so that its index
bundle is an element of $K(X)[[q]]$. When $p_1 (Z) = 0$, we derive
some explicit formulas for the Chern character of this index
bundle using its modular properties. We also use the modularity to
identify our index bundle with an $L(E_8)$ bundle in a special
case.
\end{abstract}

\everymath{\displaystyle}

\section{Introduction}

In the 1980's several genera valued in the ring of modular forms were introduced.  The elliptic genera originated in work of Ochanine \cite{Och}
and Landweber and Stong \cite{LandStong}, and were soon after given an interpretation through physics.  By extending the path integral proof of the Atiyah-Singer
index theorem to a certain supersymmetric nonlinear sigma model, it was shown by Alvarez, Killingback, Mangano, and Windey in \cite{Alvarez89} and \cite{Alvarez87}
that the elliptic genera could be viewed as the equivariant index of a certain twisted Dirac like operator on the free loop space.
They also showed that the index of the untwisted version of this operator, known as the Dirac-Ramond operator, could be computed; this produced another genus called
the string genus.  The string genus is also known as the Witten genus because independently around the same time Witten
greatly further elucidated the relationships between quantum field theory, genera, and index theory in \cite{Witten87} and \cite{Witten88}.
More recently, Alvarez and Windey have shown that
their earlier work can be extended to the case of families of Dirac-Ramond operators.  The index theorem proved in
\cite{Alvarez2010} for the Dirac-Ramond operator is the elliptic analogue of the original cohomological version of the Atiyah-Singer
index theorem for a family of Dirac operators \cite{ASIV}.

No one has given a general mathematical construction for the Dirac-Ramond operator
on the full loop space,
though there are some partial results (see e.g. \cite{Spera} for the case when the manifold is flat).   Each manifold can be embedded in its loop
space via constant maps, and the Dirac-Ramond operator can be defined
rigorously on the normal bundle given by this embedding as in \cite{Segal} or \cite{Taubes}.  It is well known (see \cite{HBJ}, for instance) that the index of this operator is given by
a certain formal sum of twisted $\widehat A$-genera.  From this perspective, the
index of the Dirac-Ramond operator can also be obtained by considering the operator as a formal sum of twisted Dirac operators, or equivalently as the usual Dirac operator twisted by a formal
sum of bundles.  This is the viewpoint we will take below in the family case, defining the index bundle of the Dirac-Ramond operator to be the formal sum of index bundles
from the appropriate twisted Dirac operators.  Note that such an object has been considered by Liu and Ma in \cite{LiuMa} and subsequent work where they achieved considerable rigidity results.

The outline of this paper is as follows.  We start in Section $2$ by describing the setup and making precise the Dirac-Ramond operator as a formal sum of operators.  We
will then show that the cohomological family index theorem from our formal sum agrees with that of the index theorem shown by Alvarez and Windey.  In Section $3$, we will further investigate
the family index.  In the case of certain families of string manifolds, we then use a method, different from that used in \cite{Alvarez2010}, to see that the Chern character
of the family index is given by cohomology classes with coefficients
in the ring of (quasi)modular forms.  We will show by way of an example how one can use modularity to generate relations between various index bundles associated to the various
operators used in defining the Dirac-Ramond operator.  These sorts of relations are similar to the ``anomaly cancellation formulas" which arise in physics.  Some results
of this same type, but on the level of differential forms, were derived using elliptic genera in \cite{hanliu}.  We also make use of the theory of Jacobi-like forms in order to
derive an explicit formula describing the Chern character of the index bundle for the Dirac-Ramond operator in terms of the components
of the Chern character for some twisted Dirac operators and Eisenstein series.  In Section $4$, we will apply the above formalism in the case where the manifold has dimension $8$ and
the parameterizing space has dimension less than $16$.  We can then use the formulas from Section $3$ to show that under certain conditions
the index of the family of Dirac-Ramond operators is equivalent in $(K(X)\otimes \Q) [[q]]$ to a vector bundle associated with the basic representation of the loop group for $E_8$.

\section{The Index Theorem}
%In the following, unless otherwise specified, all cohomology classes will have coefficients in $\R$; although $\Q$ or $\C$ would be just as well.  Of course, all information
%about torsion will be lost.  We will think of the classes as living in DeRham cohomology and write $\int$ to denote evaluation on the fundamental cycle or integration over the fibers
%depending on the context.  $\M$ will denote the ring of modular forms with respect to the full modular group SL$(2,\Z)$, i.e. $\M\simeq \C [E_4,E_6]$, where
%$E_n$ is the Eisenstein series of weight $n$.  $\MM$ will denote the corresponding ring of quasimodular forms.  So we have $\MM \simeq \M [E_2]$.
\subsection{General Setup}
In the following, by manifold we will always mean a smooth connected manifold without boundary.
Let $M$ be a compact spin manifold of even dimension $d$.  For any vector bundle $W\rightarrow M$ we have a sequence of vector bundles $\{ W_n\}$ defined by the generating series
\begin{equation}\label{genseries}
\bigotimes_{j=1}^{\infty}S_{q^j}W_{\C}=\sum_{n=0}^{\infty}q^nW_n.
\end{equation}
Here $W_{\C}$ denotes the complexification of $W$ and $S_t(W_{\C})=\C + tW_{\C}+t^2S^2(W_{\C})+...$ is a formal power series with vector bundle coefficients.  One has, for instance,
\begin{align*}
W_0&=\C \\
W_1&=W_{\C} \\
W_2&=S^2W_{\C}\oplus W_{\C} \\
W_3&=S^3W_{\C}\oplus (W_{\C}\otimes W_{\C})\oplus W_{\C}\\
&\vdots
\end{align*}

We can use these bundles as coefficients for the usual chiral Dirac operator $\dd$ on $M$ to obtain twisted Dirac operators $\dd^{W_n} : C^{\infty} (S^+\otimes W_n)\rightarrow C^{\infty} (S^-\otimes W_n)$
where $S^+$ and $S^-$ are the spinor bundles of positive and negative chirality, respectively.  It is a classical result that an elliptic operator $P$ on a compact manifold
is Fredholm.  The index of $P$ is defined as
\begin{equation}\label{indexdef}
\Ind P := \dim \ker P - \dim \coker P.
\end{equation}
The twisted Dirac operators $\dd^{W_n}$ are known to be elliptic.  The Atiyah-Singer index theorem \cite{AS} provides the formula for the index of these operators as
\begin{equation}
\Ind \dd^{W_n}=\int_M \widehat A (TM) \ch (W_n).
\end{equation}
Let $x_1,...,x_{d/2}$ denote the roots for the tangent bundle $TM$.  The $x_i$'s are defined so that the total Pontryagin class $p(TM)=1+p_1(TM)+p_2(TM)+...$
satisfies $$p(TM)=\prod_{i=1}^{d/2}(1+x_i^2).$$
Then $\widehat A(p_1,p_2,...)\in H^*(M,\Q)$ is determined by the expression \begin{scriptsize}$\prod_{i=1}^{d/2}$\end{scriptsize}\begin{small}$\frac{x_i/2}{\sinh (x_i/2)}$\end{small}.

Now we define the Dirac-Ramond operator on $M$ as the formal series of operators $$\DD=\sum_{n=0}^{\infty}q^n\dd^{TM_n}.$$  Equivalently, if we allow for ``$q$-vector bundles," that is, formal power
series in $q$ whose coefficients are vector bundles, then we can view it as the the twisted Dirac operator,
$$\DD : C^{\infty} (S^+\otimes \bigotimes_{n=1}^{\infty}S_{q^n}TM_{\C})\rightarrow C^{\infty} (S^-\otimes \bigotimes_{n=1}^{\infty}S_{q^n}TM_{\C}).$$
Extending the definition of index to each coefficient of the formal operator $\DD$ as
\begin{equation}\label{inddr}
\Ind \DD=\sum_{n=0}^{\infty}q^n\Ind \dd^{TM_n}
\end{equation}
the calculation of this index is a well known calculation
using formulas involving characteristic classes and elliptic functions.  We will essentially do it below when we prove the family index theorem.  The result can be written
in any of the following ways:
%They can also be found in e.g. \cite{HBJ} along with the relevant characteristic class formulas.

\begin{align}\label{inddr2}
  %\mathcal{\slashed{D}} &=\int_M \prod_{n=1}^{d/2}\left(\frac{x_n/2}{\sinh (x_n/2)}\prod_{j=1}^{\infty}\frac{1}{(1-q^je^{x_n})(1-q^je^{-x_n})} \right)\notag\\
 \Ind \DD&= \frac{q^{d/24}}{\eta (q)^d}\int_M \prod_{i=1}^{d/2} x_i\frac{\theta '(0,q)}{\theta (x_i,q)}  \\
&= \frac{q^{d/24}}{\eta (q)^d}\int_M \prod_{i=1}^{d/2} \frac{x_i}{\sigma (x_i,q)}e^{G_2(q)p_1(M)}\notag\\
&=\frac{q^{d/24}}{\eta (q)^d}\int_M \prod_{i=1}^{d/2}\exp\left(\sum_{n=1}^{\infty}\frac{2}{2n!}G_{2n}(q)x_i^{2n}\right )\notag
\end{align}
where $\theta(x,q)$ is the Jacobi theta function, $\sigma (x,q)$ is the Weierstrass sigma function, and $G_{2n}(q)$ is the Eisenstein series of weight $2n$.
The definitions of all of these functions and some of the relations between them
can be found in the appendix.  With the assumption that $M$ is spin the equation (\ref{inddr}) implies that (\ref{inddr2}) is an element of $\Z [[q]]$.
The expression on the RHS of (\ref{inddr2}) makes sense for any compact oriented manifold and more generally lies in $\Q[[q]]$.
The factor of
\begin{equation}\label{afactor}
\frac{q^{d/24}}{\eta (q)^d}=\prod_{j=1}^{\infty}(1-q^j)^{-d}
\end{equation}
is not very interesting.  We will often choose to omit it,
defining $$\ind \DD := \frac{\eta(q)^d}{q^{d/24}} \Ind \DD.$$
With this normalization, $\ind \DD$ is the $q$ expansion of a modular form of weight $d/2$ when $p_1(M)=0$ and a quasimodular form of the same weight otherwise.
%Via the usual identification $q=e^{2\pi i \tau}$ used when studying modular forms, where $\tau$ lies in the upper half complex plane,
The second line of (\ref{inddr2}) indicates that $\ind \DD$ is an elliptic analogue of the index
of the usual Dirac operator: $\sinh (x/2)$ is a periodic function on $i\R$ with simple zeros at every point in the lattice $2\pi i \Z$, while the Weierstrass
sigma function $\sigma (x,\tau)$ is a (quasiperiodic) extension of this to the complex plane and the two dimensional lattice $2\pi i (\Z \oplus \tau \Z)$.
Here we have used the change of variables $q=e^{2\pi i \tau}$, common in the study of modular forms, where $\tau$ is an element of the upper half complex plane.
Based on this similarity we will define the following for a rank $r$ real vector bundle $W$ over $M$ with roots $w_1,...,w_{r/2}$
$$\widehat a (W, q)=\prod_{i=1}^{r/2}\frac{w_i}{\sigma (w_i,q)}\in H^*(M,\Q[[q]])=H^*(M,\Q)[[q]].$$
In the case when $M$ is a string manifold (that is, $M$ is a manifold with $p_1(M)=0$)\footnote{Since we are working only with rational cohomology, we ignore the subtler condition that $p_1(M)/2$ should equal $0$ in integral cohomology.}, then we can write
\begin{equation}
\ind \DD = \int_M \widehat a (TM,q).
\end{equation}

\subsection{Family Index}

The index of a single Fredholm operator is an integer.  The index of a family $P$ of elliptic operators $P_x:C^{\infty} (E_x)\rightarrow C^{\infty}(F_x)$ parameterized by a compact space $X$
is an element of $K(X)$ defined as follows.  For each $x\in X$, one has the finite dimensional subspaces $\ker P_x$ and $\coker P_x$ of $C^{\infty}(E_x)$ and $C^{\infty} (F_x)$, respectively.
If as $x$ varies
these vector spaces gave rise to the vector bundles $\ker P$ and $\coker P$ over $X$, the desired generalization of (\ref{indexdef}) would be
\begin{equation}\label{indexdeffam}
\indx P := \ker P - \coker P \in K(X).
\end{equation}
This need not be the case, however, as there is the possibility that, as $x$ varies, the dimensions of $\ker P_x$ and $\coker P_x$ may jump.  It is only
the difference in their dimensions that is fixed by invariance of the index under continuous perturbations.  Thus $\ker P$
and $\coker P$ need not be vector bundles.  Regardless, there is a way to define the index in the case of a family (see section 2 of \cite{ASIV})
which reduces to (\ref{indexdeffam}) in the case when $\ker P_x$ and $\coker P_x$ are of constant dimension.

By a family of compact spin manifolds we will mean a triple $\mathbf{\mathcal F} =(\pi, Z, X)$ such that $\pi : Z \rightarrow X$ is a fiber bundle
whose fibers $Y_x:=\pi ^{-1}(x)$ are compact spin
manifolds all diffeomorphic, and with spin structure isomorphic, to some compact spin manifold $Y$.  We will assume that $X$ is a compact spin \footnote{The spin condition on $X$ is really only necessary in the last section.}
manifold, so that $Z$ is compact and spin as well.
We will always assume all manifolds are even dimensional and denote the dimension of $Y$ by $m$.  We will also denote the vertical tangent bundle
(the bundle which is tangent to the fibers, i.e. $\text{ker }\pi_*$) by $V\rightarrow Z$.  Applying the generating
sequence (\ref{genseries}) to $V$ we get a sequence of vector bundles $V_n$ over $Z$.

Given a family of compact spin manifolds, a family of Dirac operators can be constructed by taking the ordinary Dirac operator on each fiber $Y_x$.  Moreover,
the vector bundles $V_n$ over $Z$ restrict to the fiber $Y_x$ and give rise to families of twisted Dirac operators $\dd_x ^{V_n} : \Gamma (S_x^+\otimes V_n|_{Y_x})\rightarrow \Gamma (S_x^-\otimes V_n|_{Y_x})$.
 The index of the family $\dd_x^{V_n}$ is denoted $\indx \dd ^{V_n}$ and is an element of $K(X)$.  The original
formula for the Chern character of the index bundle for a general
family of elliptic operators is given in \cite[Theorem 5.1]{ASIV}.
For the case of a family of twisted Dirac
operators the formula is (see \cite[Theorem 4.17]{B} or
\cite[Corollary 15.5]{LM})
\begin{equation}\label{ordfamthm}
\ch (\indx \dd^{V_n})=\int_Y \widehat A
(V) \ch (V_n) \in H^*(X,\Q)
\end{equation}
where $\int_Y : H^*(Z,\Q)\rightarrow H^{*-m}(X,\Q)$ is the integration over the fibers map.

Given a family of compact spin manifolds, we will define a family of Dirac-Ramond operators to be the $q$-series with the coefficient of $q^n$
being the family of Dirac operators twisted by $V_n$
$$\DD:=\sum_{n=0}^{\infty}q^n\dd^{V_n}.$$
We are thus led to our main object of study
\begin{define}
Let $\F=(\pi,Z, X)$ be a family of compact spin manifold.  Let $V_n$ be the sequence of vector bundles
obtained as above.  The index of the family of Dirac-Ramond operators is defined to be
\begin{equation}\label{indxdefn}
 \indx\DD := \sum_{n=0}^{\infty}q^n \indx \dd ^{V_n} \in K(X)[[q]].
\end{equation}
\end{define}
The Chern character $\ch : K(X)\rightarrow H^*(X,\Q)$ extends naturally to a map $K(X)[[q]] \rightarrow H^*(X,\Q)[[q]]$.
In particular,
$$\ch (\indx\DD)=\sum_{n=0}^{\infty}q^n \ch(\indx \dd ^{V_n})\in H^*(X,\Q)[[q]].$$
We can calculate this Chern character easily by applying the usual Atiyah-Singer index theorem for families (\ref{ordfamthm}) coefficient by coefficient.
\begin{prop}
Let $\F=(\pi,Z, X)$ be a family of compact spin manifold whose fibers $Y_x:=\pi^{-1}(x)$ are of even dimension $m$.  Let $V\rightarrow Z$ denote
the vertical bundle.Then
\begin{equation}\label{DRindthm}
\ch(\indx \DD)  = \frac{q^{m/24}}{\eta(q)^m}\int_Y \hat a (V,q)e^{G_2(q)p_1(V)}.
\end{equation}
\end{prop}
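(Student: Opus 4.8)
The plan is to evaluate $\ch(\indx\DD)$ by applying the ordinary family index theorem (\ref{ordfamthm}) to each coefficient of the defining $q$-series and then recognizing the resulting infinite product as the sigma-function expression on the right-hand side; all manipulations take place in $H^*(X,\Q)[[q]]$, so questions of convergence never arise and everything is a formal identity in $q$. First I would apply the ring homomorphism $\ch$ to (\ref{indxdefn}) and use (\ref{ordfamthm}) term by term, obtaining
$$\ch(\indx\DD) = \sum_{n=0}^\infty q^n\,\ch(\indx\dd^{V_n}) = \sum_{n=0}^\infty q^n\int_Y \widehat A(V)\,\ch(V_n).$$
Because fiber integration is $H^*(X,\Q)$-linear and $\widehat A(V)$ does not depend on $n$, I can move the sum through $\int_Y$. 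Since $\ch$ is a ring homomorphism, it turns the symmetric-power generating function into a reciprocal product, so the generating series (\ref{genseries}) identifies $\sum_n q^n \ch(V_n)$ with $\ch\big(\bigotimes_{j\ge1}S_{q^j}V_\C\big)$, giving
$$\ch(\indx\DD) = \int_Y \widehat A(V)\,\ch\Big(\bigotimes_{j=1}^\infty S_{q^j}V_\C\Big).$$

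Next I would compute the integrand by the splitting principle. Writing $w_1,\dots,w_{m/2}$ for the roots of the vertical bundle $V$, the complexification $V_\C$ has Chern roots $\pm w_i$, and the multiplicativity of $S_t$ gives $\ch(S_{q^j}V_\C)=\prod_i(1-q^je^{w_i})^{-1}(1-q^je^{-w_i})^{-1}$. Multiplying over all $j$ and combining with $\widehat A(V)=\prod_i\frac{w_i/2}{\sinh(w_i/2)}$, the integrand factors over $i$ as a product of
$$\frac{w_i/2}{\sinh(w_i/2)}\prod_{j=1}^\infty\frac{1}{(1-q^je^{w_i})(1-q^je^{-w_i})}.$$

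The essential step is then to reorganize each such factor using the product expansion of the Weierstrass sigma function from the appendix, which in the present normalization reads
$$\frac{w/2}{\sinh(w/2)}\prod_{j=1}^\infty\frac{(1-q^j)^2}{(1-q^je^{w})(1-q^je^{-w})}=e^{G_2(q)w^2}\,\frac{w}{\sigma(w,q)}.$$
Dividing out the $\prod_j(1-q^j)^2$ in each factor, multiplying over $i=1,\dots,m/2$, and using $\sum_i w_i^2=p_1(V)$, $\prod_i\frac{w_i}{\sigma(w_i,q)}=\hat a(V,q)$, and $\prod_j(1-q^j)^{-m}=q^{m/24}/\eta(q)^m$ from (\ref{afactor}) collects exactly the claimed right-hand side. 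This is the only delicate point: the sigma identity is where the bare infinite product becomes a (quasi)modular object, and the weight-two Eisenstein series $G_2$ emerges as the correction term $e^{G_2(q)p_1(V)}$ — the quasimodular anomaly that persists when $p_1(V)\ne0$ and vanishes on string families. To pin down the sign and normalization of this correction I would compare against the single-manifold computation (\ref{inddr2}), whose second and third lines already fix the convention $\log\frac{w}{\sigma(w,q)}=\sum_{n\ge2}\frac{2}{(2n)!}G_{2n}(q)w^{2n}$, from which the $n=1$ term supplies precisely the $e^{G_2 p_1}$ factor.
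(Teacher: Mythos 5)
Your proposal is correct and is essentially the paper's own proof: both apply the family index theorem (\ref{ordfamthm}) coefficient by coefficient and then identify $\widehat A(V)\,\ch\bigl(\bigotimes_{j\geq 1}S_{q^j}V_{\C}\bigr)$ with $\frac{q^{m/24}}{\eta(q)^m}\hat a(V,q)e^{G_2(q)p_1(V)}$ through the appendix identities $\sigma(z,q)=\frac{\theta(z,q)}{\theta'(0,q)}e^{G_2(q)z^2}$, $\eta(q)^3=\theta'(0,q)$, and the theta product formula. The only cosmetic difference is direction: the paper expands the right-hand side down to the $\sinh$-and-product form, while you build up from the left-hand side using the equivalent sigma-function product identity.
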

\begin{proof}
The equation (\ref{DRindthm}) is an equation in $H^*(X, \Q)[[q]]$.  We establish
(\ref{DRindthm}) by showing equality for each coefficient of $q^n$.  On the LHS, we get the coefficient of $q^n$ by applying the Chern character to $\indx\dd^{V_n}$
which is \begin{scriptsize}$\int_Y$\end{scriptsize}$ \widehat A (V) \ch (V_n)$, see (\ref{ordfamthm}).  Let $y_1,...,y_{m/2}$ be the formal Chern variables for the vertical bundle $V$.  Making use
of the formulas from the appendix we have
\begin{align*}
\frac{q^{m/24}}{\eta(q)^m}\hat{a} (V,q)e^{G_2(q)p_1(V)} &=\prod_{i=1}^{m/2}\frac{q^{1/12}}{\eta(q)^2}\frac{y_i}{\sigma (y_i,q)}e^{G_2(q)y_i^2}=\prod_{i=1}^{m/2}\frac{q^{1/12}}{\eta(q)^2}y_i\frac{\theta ' (0,q)}{\theta (y_i,q)}\\
 %&= \prod_{i=1}^{m/2}q^{1/24}\prod_{j=1}^{\infty}(1-q^j)\frac{y_i}{2q^{1/8}\sinh (y_i/2)\prod_{j=1}^{\infty}(1-q^je^{y_i})(1-q^je^{-y_i})(1-q^j)}  \\
&=\prod_{i=1}^{m/2}\frac{y_i/2}{\sinh (y_i/2)}\prod_{j=1}^{\infty}\frac{1}{(1-q^je^{y_i})(1-q^je^{-y_i})} \\
&=\hat A (V) \ch\left(\bigotimes_{j=1}^{\infty}S_{q^j}(V_{\C})\right).
\end{align*}
After integrating over the fibers we can identify the coefficient of $q^n$ as\begin{scriptsize} $\int_Y$\end{scriptsize}$ \widehat A (V) \ch (V_n)$ and the result follows.
\end{proof}
We are interested in the formal definition (\ref{indxdefn}) because our formula (\ref{DRindthm})
matches that of \cite{Alvarez2010}.  In the spirit of that paper, from now on, given a family of Dirac-Ramond operators we will denote the
Chern character of the index bundle by
\begin{equation}\label{schdefine}
\Sch(\F; q):=\ch\left(\sum_{n=0}^{\infty}q^n \indx \dd ^{V_n}\right)=\sum_{n=0}^{\infty}q^n \ch(\indx \dd ^{V_n}) \in H^*(X,\Q)[[q]],
\end{equation}
using the letter ``S" because this Chern character is of a ``stringy" version of the usual index bundle.  By the proposition we have
\begin{equation}
\Sch(\F; q)  = \frac{q^{m/24}}{\eta(q)^m}\int_Y \hat a (V,q)e^{G_2(q)p_1(V)}.
\end{equation}
At times we will prefer to use instead
\begin{equation}
\sch(\F; q)  := \frac{\eta(q)^m}{q^{m/24}}\Sch (q)=\int_Y \hat a (V,q)e^{G_2(q)p_1(V)}.
\end{equation}

\section{Modular Properties}
From now on we will restrict ourselves to so called string families of compact spin manifolds $\F=(\pi, Z, X)$, that is, families of compact spin manifolds with $p_1(Z)=0$.
We will denote by $\M^k$ (respectively $\MM ^k$) the vector space of weight $k$, level $1$ modular (respectively quasimodular) forms having rational $q$ coefficients and by $\M =\oplus_{k=1}^{\infty}\M^k$
(respectively $\MM =\oplus_{k=1}^{\infty}\MM ^k$) the full ring of modular (respectively quasimodular) forms.  It is well known that $\M \simeq \Q [E_4, E_6]$ and $\MM \simeq \M [E_2]$, where
$E_n$ is the normalized Eisenstein series of weight $n$.

Viewing $\M$ and $\MM$ as $\Q$-vector spaces we have $H^*(X,\M)\simeq H^*(X,\Q)\otimes \M$ and similarly $H^*(X,\MM)\simeq H^*(X,\Q)\otimes \MM$.
Both $H^*(X,\M)$ and $H^*(X,\MM)$ are naturally identified with subrings of $H^*(X,\Q[[q]])\simeq H^*(X,\Q)[[q]]$.
The integration over the fibers map above was extended to $H^*(Z,\Q)[[q]]\rightarrow H^{*-m}(X,\Q)[[q]]$
coefficient by coefficient.  It is not hard to see that this restricts to the map
\begin{align*}
\int_Y: H^*(Z,\MM^*)&\rightarrow H^{*-m}(X,\MM^*)\\
\omega \otimes f(q)&\mapsto\left(\int_Y\omega\right)\otimes f(q).
\end{align*}
Using this, we will first show in this section that the image of $\sch(\F ; q)$ actually lies in
the subring $H^*(X,\M)$, whenever $X$ is a string manifold, meaning $p_1(X)=0$.  If $X$ is not string then the image will be in $H^*(X,\MM)$.
As a consequence of this we will be able to derive many relationships between the homogeneous
components of $\ch (\indx \dd ^{V_n})$ for various $n$.  Finally, we will obtain a formula for the case when $X$ is not string which expresses $\Sch(\F; q)$ entirely in terms
of modular forms, the components of $\ch (\indx \dd ^{V_n})$ for various $n$, and $p_1(X)$.

\subsection{The Case When The Parameterizing Space is String}\label{hasdetinit}
Given a string family of manifolds $\pi:Z \rightarrow X$, it follows that each of the manifolds $Y_x=\pi ^{-1}(x)$ is a string manifold.  To see this,
let $i:Y_x\hookrightarrow Z$ be the inclusion map.  Then the restriction $i^*TZ$ splits as $TY_x\oplus N_x$, where $N_x\rightarrow Y$ is the normal
bundle to $Y_x$ in $TZ$.  Since $\pi:Z \rightarrow X$ is locally trivial, $N_x$ is trivial.  Therefore, $p_1(N_x)=0$ and hence
$$p_1(Y_x)=i^*p_1(TZ)=0.$$
One can always split the tangent bundle to $Z$, though non-canonically, as $TZ=V\oplus\pi ^*(TX)$.  Consequently,
$p_1(V)=-\pi^*p_1(X)$ and thus
\begin{equation}\label{p1n}
\int_Y p_1(V)^n=(-1)^n\int_Y \pi^*p_1(X)^n=(-1)^np_1(X)^n\int_Y 1 = 0 \quad \text{for all }n\in \mathbb N.
\end{equation}
In \cite{Alvarez2010}, the assumption $p_1(X)=0$ was present throughout and the following theorem and its corollary were already noticed, though the modularity
of the graded components was shown by other means.
\begin{thm}\label{thm31}
Let $\F=(\pi, Z, X)$ be a string family of compact spin manifolds with fibers $Y_x=\pi ^{-1}(x)$ of dimension $m$.  If we expand
$\sch(\F; q)$ into its homogeneous components as an element of $H^*(X,\Q[[q]])$,
$$\sch(\F; q)=\sch_0(\F;q)+\sch_1(\F;q)+...$$
then $\sch_j(\F;q)\in H^{2j}(X,\MM^{\frac{m}{2}+j})$, and if $p_1(X)=0$ then $\sch_j(\F;q)\in H^{2j}(X,\M^{\frac{m}{2}+j})$.  Moreover,
$$\sch_0(\F;q)=\ind \DD_Y,$$
i.e. $\sch_0(\F;q)$ is the string, or Witten, genus of $Y$.

\end{thm}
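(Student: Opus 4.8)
The plan is to extract both parts of the statement directly from the closed formula $\sch(\F;q)=\int_Y\hat a(V,q)\,e^{G_2(q)p_1(V)}$ established just above, by tracking two gradings on the integrand at once: the cohomological degree in $H^*(Z,\Q)$ and the modular weight carried by the Eisenstein coefficients. First I would expand the integrand using the third line of (\ref{inddr2}), now applied to $V$ with Chern roots $y_1,\dots,y_{m/2}$,
\[
\hat a(V,q)\,e^{G_2(q)p_1(V)}=\prod_{i=1}^{m/2}\exp\!\left(\sum_{n=1}^{\infty}\frac{2}{(2n)!}\,G_{2n}(q)\,y_i^{2n}\right).
\]
The point of this form is that each elementary factor $G_{2n}(q)\,y_i^{2n}$ couples a weight-$2n$ Eisenstein series to a class of cohomological degree $4n$, so every monomial occurring in the full expansion has modular weight equal to exactly half its cohomological degree. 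This coupling is preserved by the fiber-integration map, which by the tensor description $\int_Y(\omega\otimes f(q))=(\int_Y\omega)\otimes f(q)$ recalled above lowers cohomological degree by $m$ while acting trivially on the $q$-coefficients.

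Granting the weight--degree coupling, the first assertion is a direct count. The component $\sch_j(\F;q)$ sits in $H^{2j}(X,\Q)[[q]]$, and it arises from the degree-$(2j+m)$ part of the integrand, whose coefficients are $\Q$-linear combinations of products of Eisenstein series of total weight $(2j+m)/2=m/2+j$; since such combinations form $\MM^{m/2+j}$, we obtain $\sch_j(\F;q)\in H^{2j}(X,\MM^{m/2+j})$. For the modular refinement under $p_1(X)=0$, I would isolate the one quasimodular generator $G_2$ by splitting off the $n=1$ terms,
\[
\hat a(V,q)\,e^{G_2(q)p_1(V)}=e^{G_2(q)p_1(V)}\prod_{i=1}^{m/2}\exp\!\left(\sum_{n=2}^{\infty}\frac{2}{(2n)!}\,G_{2n}(q)\,y_i^{2n}\right),
\]
where the product on the right involves only the genuinely modular series $G_{2n}$ with $n\geq 2$. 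The key observation is that $p_1(X)=0$ forces $p_1(V)=-\pi^*p_1(X)=0$ in $H^4(Z,\Q)$, by the splitting $TZ=V\oplus\pi^*TX$ together with $p_1(Z)=0$; hence the prefactor $e^{G_2(q)p_1(V)}$ collapses to $1$, every surviving coefficient is a polynomial in the $G_{2n}$ with $n\geq 2$, and therefore a true modular form, giving $\sch_j(\F;q)\in H^{2j}(X,\M^{m/2+j})$.

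For the identification $\sch_0(\F;q)=\ind\DD_Y$, I would compute the $H^0(X)$-component of the integral formula directly. Restricting the pushforward to a point $x_0\in X$ shows that the degree-zero part of $\int_Y\omega$ is the fiberwise integral $\int_{Y_{x_0}}i^*\omega$, where $i:Y_{x_0}\hookrightarrow Z$ is the fiber inclusion. Since $i^*V=TY_{x_0}$, and hence $i^*p_1(V)=p_1(Y)$, this yields $\sch_0(\F;q)=\int_Y\hat a(TY,q)\,e^{G_2(q)p_1(Y)}$, which by the single-manifold computation (\ref{inddr2}) (in the $\eta$-normalized form matching $\ind$) is precisely $\ind\DD_Y$; because each fiber of a string family is itself string we have $p_1(Y)=0$, so this equals the Witten genus $\int_Y\hat a(TY,q)$.

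I expect the main obstacle to be the bookkeeping underlying the first paragraph: one must check that the weight--degree coupling is genuinely uniform across the entire expansion, including the mixed monomials built from distinct Chern roots and the cross terms between the $e^{G_2(q)p_1(V)}$ prefactor and the $\hat a$ factor, so that the coefficient of each fixed basis class in $H^{2j}(X,\Q)$ is an Eisenstein combination of a single weight $m/2+j$. Equivalently, one must verify that $\sch_j$ lands in the tensor product $H^{2j}(X,\Q)\otimes\MM^{m/2+j}$ rather than merely in the coefficientwise ring $H^*(X,\Q)[[q]]$. Once the Eisenstein coefficients are correctly organized by weight and the $G_2$-dependence is confined to the $p_1(V)$ prefactor, the two containment statements and the computation of $\sch_0$ follow formally.
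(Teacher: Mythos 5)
Your proposal is correct and follows essentially the same route as the paper: expand $\hat a$ via its Eisenstein exponential formula, observe that modular weight equals half the cohomological degree, push forward along the fibers, confine the $G_2$-dependence to the $p_1(V)$ factor (which vanishes when $p_1(X)=0$), and evaluate at a point to identify $\sch_0(\F;q)$ with $\ind \DD_Y$. The only cosmetic difference is that you justify (quasi)modularity of the coefficients through the ring structure $\M\simeq\Q[E_4,E_6]$, $\MM\simeq\M[E_2]$, whereas the paper derives the weight-$2n$ transformation law of the Taylor coefficients of $\hat a(z,\tau)$ from its $SL(2,\Z)$-invariance.
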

\begin{proof}
Since $p_1(Z)=0$
\begin{equation}
\sch(\F; q)=e^{-G_2(q)p_1(X)}\int_Y\hat a (V,q).
\end{equation}
Recall the change of variable $q=e^{2\pi i \tau}$ noted above.  We will write functions, such as the Weierstrass sigma function $\sigma$,
interchangeably as $\sigma(z,q)$ or $\sigma(z, \tau)$, thinking of them as a formal power series in the former case and a function on the upper half
plane in the latter.  The function $\widehat a (z,\tau) =\frac{z}{\sigma (z,\tau)}=\exp$\begin{footnotesize}$ \left(\sum_{n=2}^{\infty}\frac{2}{2n!}G_{2n}(\tau)z^{2n}\right)$\end{footnotesize}
is even, has $\widehat a(0,\tau)=1$, and is homolomorphic at $z=0$.  Thus it has a Taylor series expansion
\begin{equation}\label{littlet}
\widehat a (z,\tau)=1+f_2(\tau)z^2+f_4(\tau)z^4+...
\end{equation}
We have
$$\widehat a \left(\frac{z}{c\tau + d},\frac{a\tau+b}{c\tau+d}\right)=\exp \left(\sum_{n=2}^{\infty}\frac{2}{2n!}G_{2n}\left(\frac{a\tau+b}{c\tau+d}\right)\left(\frac{z}{c\tau +d}\right)^{2n}\right)=\widehat a (z,\tau).$$
Thus
$$\widehat a \left(\frac{z}{c\tau + d},\frac{a\tau+b}{c\tau+d}\right)=1+f_2\left(\frac{a\tau+b}{c\tau+d}\right)\left(\frac{z}{c\tau + d}\right)^2+f_4\left(\frac{a\tau+b}{c\tau+d}\right)\left(\frac{z}{c\tau + d}\right)^4+...$$
is equal to (\ref{littlet}).  Equating coefficients of $z^{2n}$ it follows that $f_{2n}\left(\frac{a\tau+b}{c\tau+d}\right)=(c\tau +d)^{2n}f_{2n}(\tau)$.
Hence (\ref{littlet})
can be considered as a formal power series with coefficients in $\M^*$ and moreover
the coefficient of $z^{2n}$ lies in $\M^{2n}$.
Letting $y_1,...,y_{m/2}$ denote the roots for the vertical bundle $V$, it follows that
\begin{align*}
\prod_{n=1}^{m/2}\widehat a (y_n,\tau)&=\prod_{n=1}^{m/2}(1+f_2(\tau)y_n^2+f_4(\tau)y_n^4+...)\\
&=1+f_2(\tau)p_1(V)+f_4(\tau)p_1(V)^2+(f_2(\tau)^2-2f_4(\tau))p_2(V)+...
\end{align*}
That is, if we let $\prod_{n=1}^{m/2}\widehat a (y_n,\tau)=1+a_2(\tau)+a_4(\tau)+...$ be the decomposition into homogeneous components in cohomology, then
$a_{2n}(\tau)\in H^{4n}(Z,\M^{2n})$.  Now,
\begin{equation}
\sch_j(\F;\tau)=\int_Y a_{\frac{m}{2}+j}(\tau)
\end{equation}
and integration over the fibers defines a map $H^{m+2j}(Z,\M^{\frac{m}{2}+j})\rightarrow H^{2j}(X,\M^{\frac{m}{2}+j})$.  Thus the $p_1(X)=0$
part of the theorem follows.  If $p_1(X)\neq 0$ then the factor $e^{-G_2(q)p_1(X)}$
will produce cohomology classes with coefficients that are polynomials in $G_2(q)$ with coefficients in $\M$, which is precisely $\widetilde \M$.  Note that since $G_2(q)$ has modular weight $2$ and $p_1(X)$ has
cohomological degree $4$, a homogeneous cohomology class of degree $2j$ in $\sch(\F ; q)$ will still have quasimodular weight $m/2+j$.  The last part follows by evaluating on a point in $X$.
\end{proof}
The (quasi)modularity forces many relations between the characteristic classes of the index bundles at each level.  When the dimension $m$ of the manifold $Y$ and the degree in cohomology are small
we have the following result.

\begin{cor}\label{coronmod}
Assume the setup of the previous theorem with $p_1(X)=0$.  Let $j$ be such that $\frac{m}{2}+j\leq 14$ and $\frac{m}{2}+j\neq 12$.
Then, for each $n$ there is $c(j,n)\in \Q$ such that
$$\ch_{j}(\indx \dd ^{V_n})=c(j,n)\ch_{j}(\indx \dd) \quad \in H^{2j}(X,\Q).$$
\end{cor}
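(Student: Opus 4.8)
The plan is to leverage Theorem \ref{thm31} together with the fact that the spaces of level-one modular forms are at most one-dimensional in the weights allowed here. Set $k=\frac{m}{2}+j$, the modular weight that Theorem \ref{thm31} attaches to the degree-$2j$ component. The first step is to recall the dimension formula: $\dim_\Q \M^k=0$ when $k$ is odd, while for even $k\geq 0$ one has $\dim_\Q \M^k=\lfloor k/12\rfloor+1$ unless $k\equiv 2\pmod{12}$, in which case it is $\lfloor k/12\rfloor$. The hypotheses $k\leq 14$ and $k\neq 12$ are precisely what force $\dim_\Q\M^k\leq 1$: among even weights $\leq 14$, only $k=12$ gives a two-dimensional space, and all odd weights contribute zero.

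By Theorem \ref{thm31} (here using $p_1(X)=0$) we have $\sch_j(\F;q)\in H^{2j}(X,\M^k)$. If $\M^k=0$ then $\sch_j(\F;q)=0$; since $\sch$ and $\Sch$ differ only by the invertible scalar power series $\frac{\eta(q)^m}{q^{m/24}}=\prod_{\ell\geq 1}(1-q^\ell)^{-m}$, whose constant term is $1$, the degree-$2j$ part of $\Sch(\F;q)=\sum_n q^n\ch(\indx\dd^{V_n})$ also vanishes. Comparing coefficients of $q^n$ then forces $\ch_j(\indx\dd^{V_n})=0$ for every $n$, and the claimed relation holds trivially with $c(j,n)=0$. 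Otherwise $\dim_\Q\M^k=1$; because these weights support no nonzero cusp form (the first being $\Delta$ in weight $12$), the space is spanned by a modular form with nonzero constant term, which we normalize to the Eisenstein generator $E_k$ (constant term $1$). We may therefore write $\sch_j(\F;q)=\omega_j\,E_k(q)$ for a single cohomology class $\omega_j\in H^{2j}(X,\Q)$, so that the entire $q$-dependence of $\sch_j$ is carried by the one modular form $E_k$.

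The final step is to read off the individual twisted indices. Expand $\frac{q^{m/24}}{\eta(q)^m}E_k(q)=\sum_{n\geq 0}b_n q^n$ with $b_n\in\Q$ and $b_0=1$. Multiplying $\sch_j(\F;q)$ by $\frac{q^{m/24}}{\eta(q)^m}$ recovers the degree-$2j$ component of $\Sch(\F;q)$, so matching coefficients of $q^n$ gives $\ch_j(\indx\dd^{V_n})=b_n\,\omega_j$ for all $n$. Taking $n=0$ and using $V_0=\C$, hence $\dd^{V_0}=\dd$, yields $\ch_j(\indx\dd)=b_0\,\omega_j=\omega_j$. Substituting back gives $\ch_j(\indx\dd^{V_n})=b_n\,\ch_j(\indx\dd)$, so the corollary holds with $c(j,n)=b_n$.

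The one substantive point is the nonvanishing of the constant term $b_0$, equivalently that the generator of $\M^k$ is not a cusp form; this is exactly where the exclusion of weight $12$ and the bound $k\leq 14$ are used. Were $b_0$ to vanish, the class $\ch_j(\indx\dd)$ could be zero while some $\ch_j(\indx\dd^{V_n})$ is nonzero, and no scalar $c(j,n)$ could relate them. Everything else is formal: the modularity is furnished by Theorem \ref{thm31}, and the transition between $\sch$ and $\Sch$ is just multiplication by the fixed invertible scalar series $\frac{\eta(q)^m}{q^{m/24}}$.
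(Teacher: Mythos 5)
Your proposal is correct and follows essentially the same route as the paper: invoke Theorem \ref{thm31} to place $\sch_j(\F;q)$ in $H^{2j}(X,\M^{\frac{m}{2}+j})$, use that $\M^{\frac{m}{2}+j}$ is spanned by $E_{\frac{m}{2}+j}$ (or is zero) for the allowed weights, identify $\omega_j=\ch_j(\indx\dd)$ by comparing constant terms after multiplying by $q^{m/24}/\eta(q)^m$, and read off $c(j,n)$ as the $q^n$-coefficient of $E_{\frac{m}{2}+j}(q)\prod_{\ell\geq 1}(1-q^\ell)^{-m}$. The only additions beyond the paper's argument are explicit bookkeeping (the dimension formula for $\M^k$ and the remark that the generator is not a cusp form), which the paper leaves implicit.
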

\begin{proof}
From the previous theorem we know that the degree $2j$ component $\sch_j(\F;q)$ of $\sch(\F; q)$ is an element of $H^{2j}(X, \M^{\frac{m}{2}+j})=H^{2j}(X,\Q)\otimes\M^{\frac{m}{2}+j}$.
For $m/2+j=4,6,8,10,14$, have $\M ^{m/2+j}=\Q E_{\frac{m}{2}+j}(q)$.  Thus
\begin{equation}\label{2kcoh}
\sch_{j}(\F;q)=\omega E_{\frac{m}{2}+j}(q), \ \ \text{for some } \omega \in H^{2j}(X,\Q).
\end{equation}
Now also,
\begin{equation}\label{2kcoh2}
\sch_{j}(\F;q) = \frac{\eta (q)^m}{q^{m/24}} \Sch_{j}(\F;q) =  \prod_{n=1}^{\infty}(1-q^n)^m \sum_{i=0}^{\infty}\ch_{j} (\indx \dd ^{V_i})q^i
\end{equation}
Comparing the $q^0$ term in (\ref{2kcoh}) and (\ref{2kcoh2}) we see $\omega =\ch_{j} (\indx \dd)$.  Hence
\begin{align}\label{schexpand}
\sum_{i=0}^{\infty}\ch_{j} (\indx \dd ^{V_i})q^i&=\Sch_{j}(\F;q) \\
                                               &=\frac{q^{m/24}}{\eta (q)^m}\sch_{j}(q)= \ch_{j} (\indx \dd ) q^{m/24}\frac{E_{\frac{m}{2}+j}(q)}{\eta (q)^m}.\notag
\end{align}
The proportionality factor $c(j,n)$ are then extracted from $E_{\frac{m}{2}+j}(q)$\begin{scriptsize}$\prod_{n=1}^{\infty}$\end{scriptsize}$(1-q^n)^{-m}$ by taking the coefficient of $q^n$.
Since $\M ^{\frac{m}{2}+j}=\{0\}$ for all other values of $m/2+j$ up to $14$ except $12$, the result follows (trivially) for these values.
\end{proof}
The above does not apply in the case of $m/2+j=12$ since $\M ^{12}$ is $2$ dimensional.  However, the methods of the proof above can be extended.  Essentially,
one sees that if the dimension of $\M ^{m/2+j}$ is $s > 1$, then for all $n\geq s$, $\ch _{j} (\indx \dd ^{V_n})$ will be a linear combination of
$\ch _{j} (\indx \dd ^{V_0})$,...,$\ch _{j} (\indx \dd ^{V_{s-1}})$.

We will illustrate all of this very explicitly when dim $Y=8$ for low degrees in cohomology.  Assume also that $p_1(X)=0$ and that dim $X$ is divisible by $4$.
In this situation every time a cohomology class of degree $2k$ appears, it will be multiplied by a
modular form of weight $4 +k$.  From Corollary \ref{coronmod} we know
\begin{align}\label{expn111}
\sch_{\leq 6}(\F ;q)=\ch_0(\indx \dd) E_4(q)&+\ch_2(\indx \dd) E_6(q)+\ch_4(\indx \dd) E_8(q)\\
                                                                     &+\ch_{6}(\indx \dd) E_{10}(q).\notag
\end{align}
The degree $0$ component of the Chern character is just the (virtual) rank of the index bundle.  Set $\nu_i:=\rk \indx \dd^{V_i}$.  In particular $\nu_0$ is the virtual rank of the
index bundle of the untwisted Dirac operator, i.e. the index of the usual Dirac operator on $Y$.  Using (\ref{schexpand}) for $k=0$ one obtains
$$\nu_0q^{1/3}\frac{E_4(q)}{\eta (q)^8}=\sum_{i=0}^{\infty}\nu_iq^i.$$
We can use
\begin{equation}\label{deg0ind}
\nu_0\frac{E_4(q)}{\eta (q)^8}=\nu_0 j(q)^{1/3}=\nu_0q^{-1/3}(1 + 248q + 4124q^2+...).
\end{equation}
Then comparing the right hand sides of the previous two equations we get relations like
\begin{align}\label{ch0s}
\rk \indx \dd^{V_{\C}}&= 248 \rk \indx \dd \\
\rk \indx \dd^{S^2V_{\C}\oplus V_{\C}}&= 4124 \rk \indx \dd\notag\\
&\vdots\notag
\end{align}
Putting $k=2$ in (\ref{schexpand}) gives
$$\ch_2(\indx \dd)q^{1/3}\frac{E_6(q)}{\eta (q)^8}=\sum_{i=0}^{\infty}\ch_2(\indx \dd^{V_i})q^i.$$
Noting also that $q^{1/3}\frac{E_6(q)}{\eta (q)^8}=(1-496q-20620q^2+...)$ gives
\begin{align}\label{ch4s}
\ch_2( \indx \dd^{V_{\C}})&= -496 \ch_2( \indx \dd) \\
\ch_2( \indx \dd^{S^2V_{\C}\oplus V_{\C}})&= -20620\ch_2(\indx \dd)\notag\\
&\vdots\notag
\end{align}
There are similar relations between the cohomology classes in degree $8$ and $12$ that one could write out.  The degree $16$ cohomology classes, however,  are not all proportional.  This is because they have coefficients in $\M ^{12}$, which
is now $2$ dimensional.
We use the basis $\{E_4(q)^3-728\Delta (q),\Delta (q)\}$ of $\M^{12}$.  We have
$$\Sch_{8}(\F ;q)=\alpha q^{1/3}\frac{E_4(q)^3-728\Delta (q)}{\eta(q)^8} + \beta q^{1/3}\frac{\Delta(q)}{\eta(q)^8}=\alpha (1 + 196732q^2+...) + \beta (q-16q^2+...).$$
which is to be compared with
$$\sum_{i=0}^{\infty}\ch_{8} (\indx \dd ^{V_i})q^i=\ch_{8} (\indx \dd ^{V_0})+\ch_{8} (\indx \dd ^{V_1})q+...$$
and the values of $\alpha$ and $\beta$ are easily read off so that (\ref{expn111}) can be extended by including
\begin{align*}
\sch_{8}(\F ;q)&=\ch_{8}(\indx \dd) (E_4(q)^3-728\Delta (q))+\ch_{8}(\indx \dd^{V_{\C}})\Delta (q).
\end{align*}
We could expand this in powers of $q$.  Rather than the cohomology classes $\ch_8(\indx \dd^{V_n})$ being proportional for all $n\in \mathbb N$, as before, we would see that
$\ch_8(\indx \dd^{V_n})$ for $n\geq 2$ is a linear combinations
of $\ch_8(\indx \dd)$ and $\ch_8(\indx \dd^{V_1})$.

Moving to degree $20$ in cohomology will give weight $14$ modular forms.  Here again the Corollary \ref{coronmod} can be applied and the next term is simply in $\sch(\F ; q)$ is
$$\sch_{10}(\F ;q)=\ch_{10}(\indx \dd) E_{14}(q).$$

The method for all higher degrees is a straightforward generalization of the degree $16$ case.
Let $r$ denote the dimension of $\mathcal M ^k$, then there is a basis $\{ f_0(q), f_1(q)\Delta(q), ...,f_{r-1}(q)\Delta(q)^{r-1}\} $
for $\mathcal M ^k$ with $f_i(0)=1$ and, of course, $\Delta(q)^i=q^i+...$.
Simple linear algebra can be used to find in $\mathcal M ^k$ a basis $\{ \phi_0(q),...,\phi_{r-1}(q)\}$ instead which satisfies $q^{1/3}\frac{\phi_i(q)}{\eta(q)^8}=q^i+\mathcal O (q^{r})$.
Working in this basis it is easy to see $$\Sch_{k-4}(\F ;q)=\frac{q^{1/3}}{\eta(q)^8}\left(\ch_{k-4} (\indx \dd ^{V_0})\phi_0(q)+...+\ch_{k-4} (\indx \dd ^{V_{r-1}})\phi_{r-1}(q)\right).$$

We restate the last part of the example in more general terms.
\begin{prop}\label{genprop}
Let $Z \rightarrow X$ be a string family of compact spin manifolds where each $Y_x=\pi^{-1}(X)$ has even dimension $m$ and $p_1(X)=0$.
Let $s_j=\dim \mathcal M ^{\frac{m}{2}+j}$, and $\{ \phi_0(q),...,\phi_{s_j-1}(q)\}$ be the basis for $\M^{\frac{m}{2}+j}$ which satisfies $q^{m/24}\frac{\phi_i(q)}{\eta(q)^m}=q^i+\mathcal O (q^{s_j})$.
Then $$\Sch_{j}(\F ;q)=\frac{q^{m/24}}{\eta(q)^m}\left(\ch_{j} (\indx \dd ^{V_0})\phi_0(q)+...+\ch_{j} (\indx \dd ^{V_{s_j-1}})\phi_{s_j-1}(q)\right).$$
\end{prop}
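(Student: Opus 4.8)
The plan is to leverage Theorem \ref{thm31} to write $\sch_j(\F;q)$ in the prescribed basis and then transfer the identification to $\Sch_j$ by matching low-order $q$-coefficients; this is simply the general version of the degree-$16$ computation carried out in the example above.

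First I would invoke Theorem \ref{thm31}: since $p_1(X)=0$, the degree-$2j$ component satisfies $\sch_j(\F;q)\in H^{2j}(X,\M^{\frac m2+j})=H^{2j}(X,\Q)\otimes\M^{\frac m2+j}$. As $\{\phi_0,\dots,\phi_{s_j-1}\}$ is a basis of the finite-dimensional $\Q$-vector space $\M^{\frac m2+j}$, there are uniquely determined classes $\omega_0,\dots,\omega_{s_j-1}\in H^{2j}(X,\Q)$ with $\sch_j(\F;q)=\sum_{i=0}^{s_j-1}\omega_i\,\phi_i(q)$.

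Next I would pass to $\Sch_j$. Using the normalization $\Sch_j(\F;q)=\frac{q^{m/24}}{\eta(q)^m}\sch_j(\F;q)$ together with the defining series $\Sch_j(\F;q)=\sum_{i=0}^\infty \ch_j(\indx\dd^{V_i})q^i$ from (\ref{schdefine}), and substituting the basis expansion above, gives
$$\sum_{i=0}^\infty \ch_j(\indx\dd^{V_i})q^i=\sum_{i=0}^{s_j-1}\omega_i\,\frac{q^{m/24}}{\eta(q)^m}\phi_i(q)=\sum_{i=0}^{s_j-1}\omega_i\bigl(q^i+\mathcal O(q^{s_j})\bigr),$$
where the final equality is exactly the defining property of the basis $\{\phi_i\}$. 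The crux is then a triangular coefficient-matching argument: for each $k$ with $0\le k\le s_j-1$, the only term on the right contributing to $q^k$ is $\omega_k q^k$, since each remainder $\mathcal O(q^{s_j})$ is supported in degrees $\ge s_j>k$ and the leading exponents $0,1,\dots,s_j-1$ are distinct. Comparing coefficients of $q^k$ yields $\omega_k=\ch_j(\indx\dd^{V_k})$ for $k=0,\dots,s_j-1$, and substituting back into the basis expansion of $\sch_j$ and renormalizing gives the stated formula.

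I expect no serious obstacle, as the reasoning mirrors the explicit degree-$16$ case. The one point deserving a word of care is the existence of the basis $\{\phi_i\}$ with $q^{m/24}\phi_i(q)/\eta(q)^m=q^i+\mathcal O(q^{s_j})$; this is the ``simple linear algebra'' already invoked in the example, relying on the fact that multiplication by $q^{m/24}/\eta(q)^m=\prod_{j}(1-q^j)^{-m}$, whose $q$-expansion begins with $1$, sends a basis of $\M^{\frac m2+j}$ to a collection of $q$-series whose leading exponents can be reduced to $0,1,\dots,s_j-1$ by Gaussian elimination, using that a modular form of weight $\frac m2+j$ is determined by its first $s_j$ coefficients. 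Since the proposition takes this basis as given, I would simply use it.
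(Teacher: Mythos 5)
Your proposal is correct and follows essentially the same route as the paper: the paper likewise invokes Theorem \ref{thm31} to place $\sch_j(\F;q)$ in $H^{2j}(X,\M^{\frac{m}{2}+j})$, expands in the triangular basis $\{\phi_i\}$ obtained by ``simple linear algebra,'' and reads off the coefficients as $\ch_j(\indx\dd^{V_i})$ by matching the first $s_j$ powers of $q$ against the defining series for $\Sch_j$. Your explicit remark on why the basis exists is a minor addition the paper leaves implicit, but there is no substantive difference in the argument.
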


%\begin{remark}
%The existence of the basis in the Proposition \ref{genprop} is not hard to show.  A basis $\{\phi_0(q),...,\phi_{s-1}(q)\}$ for $\M^j$ where $\phi_i(q)=q^i+\mathcal O (q^{s})$
%is often referred to as a Miller basis.  A proof for the existence of the basis used in the Proposition follows, with minor modifications, from the proof of the
%existence of a Miller basis (see for instance \cite[Lemma 2.20]{Stein}).
%\end{remark}

Before moving on we wish to point out some connection with the preceding and anomaly cancellation.  Thinking of the index bundle for the Dirac operator
as the formal difference $\indx \dd=\ker \dd - \text{coker} \dd$ in $K(X)$, one can define the determinant line bundle
$$\det \dd = \det (\ker \dd)\otimes \det (\coker \dd)^* \in K(X).$$
As in defining the index bundle, this is not strictly true as the dimension of each
space $\ker \dd$ and $\text{coker } \dd$ may individually jump.  However the determinant line bundle $\det \dd \rightarrow X$ can still be defined (see \cite{FreedDet}) and one has
\begin{equation}
c_1(\det \dd)=c_1(\indx \dd)=\ch_1(\indx \dd)\in H^2(X,\Q).
\end{equation}
In physics, this characteristic class is referred to as an anomaly.  The Proposition \ref{genprop} produces many ``anomaly cancellation formulas."
Examples of these formulas are
\begin{equation}\label{anom}
c_1(\det \dd^{V_n})=\alpha (n)c_1(\det \dd), \ \text{ for some } \alpha(n)\in \Q.
\end{equation}
which follow directly from Corollary \ref{coronmod} whenever $\dim Y \leq 24$, except when $\dim Y=20$.  The equation (\ref{anom}) holds nontrivially
when $\dim Y=6,10,14,18,$ or $22$.  The operator $\dd^{V_1}=\dd^{V_{\C}}:C^{\infty}(S^+\otimes V_{\C})\rightarrow C^{\infty}(S^-\otimes V_{\C})$ is almost
what is known as
the Rarita-Schwinger operator.  If $\dim Y=6$ and $p_1(Z)=p_1(X)=0$ \footnote{It is sufficient to require just that $p_1(V)=0$ instead.}
the following formula holds.
$$c_1(\det \dd ^{V_{\C}})=246c_1(\det \dd)$$
If $\dim Y=20$ or $\dim Y >24$ one needs to appeal more directly to Proposition \ref{genprop}.  For instance, in the case that $\dim Y=20$ the proposition gives
$$\Sch_1(\F ;q)=\frac{q^{5/6}}{\eta (q)^{20}}\left(\ch_1(\indx \dd)(E_4(q)^3-740\Delta (q))+\ch_1(\indx^{V_1})\Delta(q)\right)$$
from which it follows that
$$c_1(\det \dd^{S^2V_{\C}\oplus V_{\C}} )=196870 c_1(\det \dd ) - 4 c_1(\det \dd^{V_{\C}} ).$$

\subsection{Some Computational Motivation}\label{compmotiv}
Now we will drop the assumption that $p_1(X)= 0$ (but maintain $p_1(Z)=0$). The results will now be quasimodular rather than modular.  The dimensions of
the space of quasimodular forms grow much quicker as one goes to higher weights.  Because of this one might expect much less
rigidity in the structure of the index bundle for a family of Dirac-Ramond operators.  However, we will see that this is not the case.  In the next section
we will state and prove a theorem which generalizes ($\ref{genprop}$) in the case $p_1(X) \neq 0 $.  The formula within the theorem is very complicated and in this
section we will demonstrate the formula in some special cases. In the case where the fiber $Y$ has dimension $8$ some direct computation (see the appendix)
shows the following

\begin{align}\label{start}
& \sch_{\leq 6}(\F ;q)=\notag \\
& \nu_0\left(E_4(q)+\frac{1}{(4)_1}E_4'(q)\left(\frac{p_1(X)}{2}\right)+\frac{1}{2! (4)_2}E_4''(q)\left(\frac{p_1(X)}{2}\right)^2+\frac{1}{3! (4)_3}E_4'''(q)\left(\frac{p_1(X)}{2}\right)^3\right)\notag\\
+&\ch_2(\indx \dd) \left(E_6(q)+\frac{1}{(6)_1}E_6'(q)\left(\frac{p_1(X)}{2}\right)+\frac{1}{2! (6)_2}E_6''(q)\left(\frac{p_1(X)}{2}\right)^2\right)\\
+&\ch_4(\indx \dd) \left(E_8(q)+\frac{1}{(8)_1}E_8'(q)\left(\frac{p_1(X)}{2}\right)\right)\notag\\
+&\ch_{6}(\indx \dd) E_{10}(q)\notag
\end{align}
where $(k)_n=k(k+1)...(k+n-1)$ is the Pochhammer symbol and $f'(q)= q$\begin{small}$\frac{d}{dq}$\end{small}$f(q)$.  Note that all of the terms with a derivative are of order $q$,
so when $q\rightarrow 0$ one obtains $\Sch_{\leq 6}(\F ;0)=\nu_0+\ch_2(\indx \dd)+\ch_4(\indx \dd)+\ch_6(\indx \dd)$, as expected from ($\ref{schdefine}$).
Of course, the $p_1(X)\rightarrow 0$ limit reduces to the previous
case ($\ref{expn111}$).

Using (\ref{start}), $\Sch(\F ; q)=\sch(\F ; q)$\begin{scriptsize}$\prod_{n=1}^{\infty}$\end{scriptsize}$(1-q^n)^{-8}$, and the $q$ expansion of the Eisenstein series the following relations
are obtained
\begin{align}\label{p1not0}
\ch_2(\indx \dd^{V_{\C}})&=-496\ch_2(\indx \dd)+30\nu_0p_1(X) \notag\\
\ch_2(\indx \dd^{S^2V_{\C}\oplus V_{\C}})&=-20620\ch_2(\indx \dd)+780\nu_0p_1(X)\\
\ch_4(\indx \dd^{V_{\C}})&=488\ch_4(\indx \dd_z)-42p_1(X)\ch_2(\indx \dd)+\frac{3}{2}\nu_0p_1(X)^2\notag\\
\ch_4(\indx \dd^{S^2V_{\C}\oplus V_{\C}})&=65804\ch_4(\indx \dd)-3108p_1(X)\ch_2(\indx \dd)+66\nu_0p_1(X)^2.\notag
\end{align}

The next relevant degree is $16$, where the modular forms become weight $12$ and now have an extra dimension;
we expect something interesting to happen.  The result can be written

\begin{align}\label{sch8deg16}
\sch_{8}(\F ;q) &=\ch_{8}(\indx \dd) (E_{4}(q)^3-728\Delta (q)  )+\ch_{8}(\indx \dd^{V_{\C}}) \Delta(q) \notag \\
&+\frac{\nu _0}{4!\cdot (4)_4}\left(E_4^{(4)}(q)-240\Delta (q)\right)\left(\frac{p_1(X)}{2}\right)^4  \\
&+\frac{\ch_2(\indx \dd) }{3!\cdot (6)_3}\left(E_6^{(3)}(q)+504\Delta (q)\right)\left(\frac{p_1(X)}{2}\right)^3 \notag \\
&+\frac{\ch_4(\indx \dd) }{2!\cdot (8)_2 }\left(E_8^{(2)}(q)-480\Delta (q)\right)\left(\frac{p_1(X)}{2}\right)^2+\frac{\ch_{6}(\indx \dd) }{(10)_1}\left(E_{10}'(q)+264\Delta (q)\right)\left(\frac{p_1(X)}{2}\right).\notag
\end{align}
What we see is that the pattern of coefficients in (\ref{start}) continues, but there are extra terms.  Notice that each of the terms $E_4^{(4)}(q)-240\Delta (q)$,...,$E_{10}'(q)+264\Delta (q)$ are all order $q^2$.
This makes sense since after multiplying by \begin{scriptsize}$\prod_{n=1}^{\infty}$\end{scriptsize}$(1-q^n)^{-8}$ the second term will give $\ch_{8}(\indx \dd^{V_{\C}}) q$ and this is entirely what the coefficient of $q$ in $\Sch_{8}(q)$ should be.

When $\dim Y=6$, one obtains similar results.  The same calculations as above show that in this case
\begin{align}\label{dim6fiber}
&\sch_{\leq 7}(\F ;q)=\notag \\
& \ch_1(\indx \dd)\left(E_4(q)+\frac{1}{(4)_1}E_4'(q)\left(\frac{p_1(X)}{2}\right)+\frac{1}{2! (4)_2}E_4''(q)\left(\frac{p_1(X)}{2}\right)^2+\frac{1}{3! (4)_3}E_4'''(q)\left(\frac{p_1(X)}{2}\right)^3\right)\notag\\
+&\ch_3(\indx \dd) \left(E_6(q)+\frac{1}{(6)_1}E_6'(q)\left(\frac{p_1(X)}{2}\right)+\frac{1}{2! (6)_2}E_6''(q)\left(\frac{p_1(X)}{2}\right)^2\right)\\
+&\ch_5(\indx \dd) \left(E_8(q)+\frac{1}{(8)_1}E_8'(q)\left(\frac{p_1(X)}{2}\right)\right)\notag\\
+&\ch_{7}(\indx \dd) E_{10}(q)\notag
\end{align}

\subsection{A General Formula}\label{pfsect}

%We will now start making the identification $q=e^{2\pi i \tau}$ where $\tau$ is an element of the upper half of the complex plane $\mathcal h$.
Let $\mathfrak h$ denote the complex upper half plane and $Hol(\mathfrak h)$ denote the space of holomorphic functions on $\mathfrak h$.
We will make extensive use of the following
\begin{define}
A Jacobi-like form of weight $k$ and index $\lambda$ is an element of $F(z,\tau)\in Hol (\mathfrak h)[[z]]$ such that
\begin{equation}\label{jaclike}
F\left (\frac{z}{c\tau + d}, \frac{a\tau + b}{c\tau + d}\right)= (c\tau +d)^k \exp\left(\frac{c\lambda}{c\tau +d}\frac{z^2}{2\pi i}\right)F(z,\tau)
\end{equation}
for all $\left( \begin{array}{cc}
a & b \\
c & d \end{array} \right) \in SL(2,\Z)$.  We will denote the collection of Jacobi-like forms of weight $k$ and index $\lambda$ by $\mathcal J _{k,\lambda}$.
\end{define}
\noindent Jacobi-like forms satisfy one of the two transformations properties which essentially characterize Jacobi forms.  The foundations
for Jacobi forms were laid out in \cite{EZ} and the generalization of Jacobi forms
to Jacobi-like forms was introduced in \cite{CMZ} and \cite{Zagier94}.
\begin{ex}\label{jfexample}
Given a modular form $f\in \M ^k$ one can verify $F(z,\tau)=z^{n}f(\tau)e^{-G_2(\tau)\lambda z^2}$ is a Jacobi-like form of weight $k-n$ and index $\lambda/2$.
\end{ex}
As with modular forms, Jacobi-like forms can be defined on subgroups of the modular group, as well, but that will not be necessary for us here.
Since Jacobi-like forms are
invariant under the transformation $\tau \mapsto \tau +1$ they have a Fourier expansion in terms of $e^{2\pi i \tau}$.  Setting $q=e^{2\pi i \tau}$ we will sometimes
write a Jacobi-like form $F(z,\tau)$ instead as $F(z,q)$.
%In what follows we will only have use for Jacobi-like forms which are even in $z$, i.e. those
%that satisfy $F(-z,\tau)=F(z,\tau)$.  We will denote the collection of these by $\mathcal J^+_{k,\lambda}$.
Given a Jacobi-like form $F(z,\tau)=$\begin{scriptsize}$\sum_{j=0}^{\infty}$\end{scriptsize}$\chi_{j}(\tau)z^{j}\in \J_{k,\lambda}$
we show in the appendix that $\chi_{j}$ is a modular form of weight $k+j$ if the index is zero and a quasimodular form of the same weight otherwise.
To deal with the slight technicality that
our definition of $\M$ and $\MM$ are only for modular forms with rational $q$ expansion, we will
denote by $\J_{\Q,k,\lambda}$ the elements of $\J_{k,\lambda}$ with $\chi_{j}\in \MM ^{k+j}$.  Since $\MM^{k+j}$ is trivial when $k+j$ is odd,
the $\chi_j(\tau)$ are necessarily zero for half of the $j$'s.  Thus $\J_{k,\lambda}$ is the direct sum of the two subspaces
\begin{align*}
\J^+_{k,\lambda}:=\{F(z,\tau)\in \J_{k,\lambda}|F(z,\tau)&=\sum_{j=0}^{\infty}\chi_{2j}(\tau)z^{2j}\}\\
\J^-_{k,\lambda}:=\{F(z,\tau)\in \J_{k,\lambda}|F(z,\tau)&=\sum_{j=0}^{\infty}\chi_{2j+1}(\tau)z^{2j+1}\}.
\end{align*}
It is easy to verify that the following map is an isomorphism.
\begin{align}\label{plusminusisom}
\J^+_{k,\lambda}&\rightarrow \J^-_{k-1,\lambda}\\
F(z,\tau)&\mapsto zF(z,\tau).\notag
\end{align}
Given an $F(z,\tau)\in \mathcal J _{k,\lambda}$, evaluating (\ref{jaclike}) at $z=0$ shows that $F(0,\tau)$ is a modular form of weight $k$.  Thus there is a map
$\J^+ _{\Q,k,\lambda} \rightarrow \M^k$.
\begin{define}\label{CKlift}
Given $f\in \mathcal{M} ^k$ the Cohen-Kuznetsov series (or lift) of $f$ with index $\lambda$ is given by
\begin{equation}
\widetilde{f}(z, \tau) = \sum_{n=0}^{\infty}\frac{\lambda^nf^{(n)}(\tau)}{n!(k)_n}z^{2n}\in Hol(\mathfrak h)[[z]]
\end{equation}
where $(k)_n=(k+n-1)!/(k-1)!=k(k+1)...(k+n-1)$ is the Pochhammer symbol and $f^{(n)}(\tau):=\left(\frac{1}{2\pi i}\frac{d}{d\tau}\right)^nf(\tau)$.  If no mention is made of the index, it will be assumed that $\lambda=1$.
\end{define}
The derivation $D:=$\begin{small}$\frac{1}{2\pi i}\frac{d}{d\tau}$\end{small} corresponds under the change of variables $q=e^{2\pi i \tau}$ to the operator \begin{small}$q\frac{d}{dq}$\end{small}, which was used in (\ref{start}).
The ring $\MM$ has the nice property of being closed under differentiation.  In fact,
$D: \MM^*\rightarrow \MM^{*+2}$.  We illustrate a bit with the following

\begin{ex}
\begin{align}\label{E4CK}
&\widetilde E_4(z,q)=E_4(q)+\frac{1}{4}E_4'(q)z^2+\frac{1}{2!\cdot 4 \cdot 5}E_4''(q)z^4+\frac{1}{3!\cdot 4 \cdot 5\cdot 6}E_4'''(q)z^6+...\\
&=E_4(q)+\frac{1}{12}\left(E_4(q)E_2(q)-E_6(q)\right)z^2+\frac{1}{288}\left(E_4(q)^2-2E_6(q)E_2(q)+E_4(q)E_2(q)^2\right)z^4+...\notag
\end{align}
\end{ex}
It is shown in Section 3 of \cite{EZ} that if $f\in \M^k$ and $\widetilde f (z,\tau)$ is its Cohen-Kuznetsov lift with index $\lambda$, then
$\widetilde f (z,\tau)$ satisfies (\ref{jaclike}) and so $\widetilde f (z,\tau)\in \J^+ _{\Q,k,\lambda}$.  This justifies the term \emph{lift},
as the map $f \mapsto \widetilde f $ provides a section for the map $\J^+ _{\Q,k,m}\rightarrow \M^k$.
In fact, all elements of $\J_{\Q,k,\lambda}$ can be constructed from Cohen-Kuznetsov lifts via the following.
\begin{thm}\label{jlfseqs}
Given a Jacobi-like form $F(z,\tau)\in \J_{\Q,k,\lambda}$ there is a corresponding sequence of modular forms $\xi_0,\xi_1,\xi_2,...$ such that $\xi_{n}\in \M^{k+n}$ and
\begin{equation}\label{jlfck}
F(z,\tau)=\widetilde{\xi_0}(z, \tau)+z\widetilde{\xi_1}(z, \tau)+z^2\widetilde{\xi_2}(z, \tau)+...=\sum_{n=0}^{\infty}z^{n}\widetilde{\xi_{n}}(z, \tau)
\end{equation}
and $\widetilde{\xi_{n}}(z, \tau)$ is, as above, the Cohen-Kuznetsov lift of $\xi_{n}$ with index $\lambda$.  Conversely, for any sequence of modular forms $\xi_0,\xi_1,\xi_2,...$
satisfying $\xi_{n}\in \M^{k+n}$, the equation (\ref{jlfck}) defines an $F(z,\tau)\in\J_{\Q,k,\lambda}$.
\end{thm}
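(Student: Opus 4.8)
The plan is to establish the two directions separately; the converse is essentially formal, while the forward direction carries the real content. The first observation I would record is the effect of multiplication by a power of $z$: if $G(z,\tau)\in\J_{k,\lambda}$, then replacing $z$ by $z/(c\tau+d)$ in (\ref{jaclike}) shows $z^nG(z,\tau)\in\J_{k-n,\lambda}$, the extra factor $(c\tau+d)^{-n}$ lowering the weight by $n$ while leaving the index factor intact (this generalizes the isomorphism (\ref{plusminusisom})). Since $\widetilde{\xi_n}\in\J^+_{\Q,k+n,\lambda}$ whenever $\xi_n\in\M^{k+n}$, as recalled above from \cite{EZ}, it follows that $z^n\widetilde{\xi_n}\in\J_{k,\lambda}$ for every $n$. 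As $z^n\widetilde{\xi_n}$ is divisible by $z^n$, the series $\sum_n z^n\widetilde{\xi_n}$ is a well-defined element of $Hol(\mathfrak h)[[z]]$ (each power of $z$ collecting only finitely many terms), its coefficients are quasimodular of the expected weight because $D:\MM^*\to\MM^{*+2}$, and it lies in $\J_{\Q,k,\lambda}$. This proves the converse.

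For the forward direction I would argue by a $z$-adic iteration peeling off one Cohen-Kuznetsov lift at a time, the engine being the following leading-coefficient lemma: if $G(z,\tau)\in\J_{k,\lambda}$ has $z$-expansion starting in degree $N$, say $G(z,\tau)=\psi_N(\tau)z^N+O(z^{N+1})$, then $\psi_N\in\M^{k+N}$. To prove it I would expand (\ref{jaclike}) for $G$ as a power series in $z$ and compare the coefficients of $z^N$; because the index factor $\exp(\frac{c\lambda}{c\tau+d}\frac{z^2}{2\pi i})=1+O(z^2)$ cannot contribute below degree $N+2$, this comparison yields precisely $\psi_N(\frac{a\tau+b}{c\tau+d})=(c\tau+d)^{k+N}\psi_N(\tau)$, the weight $k+N$ modular transformation law. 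Holomorphy on $\mathfrak h$ and holomorphy at the cusp (hence rationality and boundedness of the $q$-expansion) are inherited from the coefficients of $G$, so $\psi_N$ is a genuine modular form. The case $N=0$ recovers the map $\J^+_{\Q,k,\lambda}\to\M^k$ already noted.

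With the lemma available I would set $F_0:=F$ and define recursively $\xi_N$ to be the leading (degree $N$) coefficient of $F_N$---which the lemma places in $\M^{k+N}$---together with $F_{N+1}:=F_N-z^N\widetilde{\xi_N}$. By the converse direction $z^N\widetilde{\xi_N}\in\J_{k,\lambda}$, so $F_{N+1}\in\J_{k,\lambda}$; and since $\widetilde{\xi_N}=\xi_N+O(z^2)$, the degree $N$ term of $z^N\widetilde{\xi_N}$ exactly cancels that of $F_N$, forcing $F_{N+1}=O(z^{N+1})$. Consequently the partial sums $\sum_{n=0}^{N}z^n\widetilde{\xi_n}$ agree with $F$ through degree $N$, and letting $N\to\infty$ gives $F=\sum_{n=0}^{\infty}z^n\widetilde{\xi_n}$ as formal power series in $z$, with each $\xi_n\in\M^{k+n}$. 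I expect the main obstacle to be exactly the modularity---rather than mere quasimodularity---of the $\xi_N$: solving coefficientwise gives the recursion $\xi_N=\chi_N-\sum_{m=1}^{\lfloor N/2\rfloor}\frac{\lambda^m\xi_{N-2m}^{(m)}}{m!\,(k+N-2m)_m}$, which expresses $\xi_N$ through the manifestly quasimodular coefficients $\chi_j$ of $F$, but from which the vanishing of the quasimodular (i.e. $E_2$-dependent) corrections is not at all apparent. It is precisely the leading-coefficient lemma that forces this cancellation, and proving that lemma is the crux.
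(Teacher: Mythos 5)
Your proof is correct, but it is genuinely different from what the paper does: the paper proves nothing here, deferring entirely to Section 3 of \cite{EZ} (see Remark \ref{rmk}) and recording the closed formula (\ref{f2ns}), which expresses each $\xi_n$ as an explicit finite combination of derivatives of the Taylor coefficients $\chi_j$ of $F$; in that approach modularity is established by verifying the transformation law of that particular combination. You instead run an induction on the order of vanishing in $z$, with all the modularity concentrated in your leading-coefficient lemma: since the index factor in (\ref{jaclike}) is $1+O(z^2)$, the lowest-order coefficient of a Jacobi-like form of weight $k$ vanishing to order $N$ transforms with weight exactly $k+N$, and subtracting $z^N\widetilde{\xi_N}$ (a Jacobi-like form of weight $k$ by your multiplication-by-$z$ observation, which correctly generalizes (\ref{plusminusisom})) raises the vanishing order by at least one. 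This is sound, and the only outside input you use --- that Cohen-Kuznetsov lifts satisfy (\ref{jaclike}) --- is a fact the paper itself imports from \cite{EZ} before stating the theorem. What your route buys: a self-contained, conceptual argument in which modularity (rather than mere quasimodularity) of the $\xi_N$ is forced rather than checked, plus uniqueness of the sequence for free. What the paper's route buys: the explicit formula (\ref{f2ns}), which is not decorative --- it is used verbatim in the proof of Theorem \ref{bigthm} to define the combinations $\xi_n(q;p_1,\dots)$ --- whereas your recursion $\xi_N=\chi_N-\sum_{m\ge 1}\lambda^m\xi^{(m)}_{N-2m}/\bigl(m!\,(k+N-2m)_m\bigr)$ produces the $\xi_n$ only implicitly. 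One small point you should make explicit: the iteration must stay inside $\J_{\Q,k,\lambda}$, not just $\J_{k,\lambda}$, i.e. each $F_{N+1}=F_N-z^N\widetilde{\xi_N}$ still has coefficients in $\MM$ with rational $q$-expansions, so that the leading coefficient is holomorphic at the cusp, has rational coefficients, and hence genuinely lies in $\M^{k+N}$ at every stage; this holds because $D$ preserves $\MM$ and rationality, an observation you make in the converse direction but also need here.
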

\begin{remark}\label{rmk}
\noindent Again, since $\M^{k+j}=\{0\}$ whenever $k+j$ is odd, half of the $\xi_j$'s necessarily vanish.  Note that in terms of the usual expansion
\begin{scriptsize}$\sum_{j=0}^{\infty}$\end{scriptsize}$\chi_{j}(q)z^{j}$ of an $F(z,\tau)\in \J_{\Q,k,\lambda}$,
the sequence of modular forms are given by
\begin{equation}\label{f2ns}
\xi_{n}(\tau)=\sum_{0\leq j \leq n/2}\frac{(-\lambda)^j(k+n-j-2)!}{j!(k+n-2)!}\chi^{(j)}_{n-2j}(\tau).
%\xi_{2n}(\tau)=\sum_{j=0}^{n}\frac{(-\lambda)^j(k+2n-j-2)!}{j!(k+2n-2)!}\chi^{(j)}_{2n-2j}(\tau).
\end{equation}
%The $\xi_{2n+1}$'s for an $F(z,\tau)\in \J^-_{\Q,k,\lambda}$ are simply the the $\xi_{2n}$'s of the element $z^{-1}F(z,\tau)\in \J^+_{\Q,k+1,\lambda}$ with the index shifted $2n\mapsto 2n+1$.
See Section 3 of \cite{EZ} for the proof.
\end{remark}

Noticing the similarity of the coefficients in (\ref{start}) and (\ref{E4CK})
we are led to formulate a general theorem which puts $\sch(\F ; q)$ as something akin to a ``Jacobi-like form in \begin{footnotesize}$\frac{p_1(X)}{2}$\end{footnotesize}".  To make this more precise we first define a map
\begin{align*}
\mathcal J^+_{\Q,k,\lambda} &\rightarrow H^*(X,\MM)\\
F(z,q)=\sum_{j=0}^{\infty}\chi_{2j}(q)z^{2j}&\mapsto \sum_{j=0}^{\infty}\chi_{2j}(q)\left(\frac{p_1(X)}{2}\right)^{j}
\end{align*}
The map is well defined since $X$ is finite dimensional.  To maybe abuse notation, we will denote the image of an element $F(z,q)$ under this map by $F\left ( \frac{1}{2}p_1(X), q\right)$
and call it a cohomological Jacobi-like form, or CJLF for short.
Using this, we see that (\ref{start}) can be restated as
\begin{align}\label{startb}
\sch_{\leq 6}(\F ;q) = & \nu_0\widetilde {E_4} \left(\frac{p_1(X)}{2},q\right)_{\leq 12} +\ch_2(\indx \dd)\widetilde {E_6} \left(\frac{p_1(X)}{2},q\right)_{\leq 8}\\
&+\ch_4(\indx \dd) \widetilde {E_8} \left(\frac{p_1(X)}{2},q\right)_{\leq 4} +\ch_{6}(\indx \dd)\widetilde{E_{10}}\left(\frac{p_1(X)}{2},q\right)_{\leq 0}\notag
\end{align}
where for any CJLF $\textstyle F\left ( \frac{1}{2}p_1(X), q\right)$ we denote its projection onto degree at most $n$ in cohomology by $\textstyle F\left ( \frac{1}{2}p_1(X), q\right)_{\leq n}$.
Compare (\ref{startb}) to (\ref{expn111}).  The general formula for $\sch(\F ; q)$ given in the theorem
below can be better understood by reexamining what we have so far and what we get by moving to the next (nonzero) degree in cohomology.  The terms that one
gets from proposition \ref{genprop} in the $p_1(X)=0$ case, i.e. those in (\ref{expn111}), have modular form coefficients.  Taking the Cohen-Kuznetsov lift of these
modular forms and promoting them to CJLF's gives (\ref{startb}).  Moving to degree $16$ in cohomology we see from (\ref{sch8deg16}) that we get two more modular terms guaranteed by
Proposition \ref{genprop} and four other terms having modular (cusp) form coefficients which cancel the coefficient of $q$ in all terms containing $p_1(X)$ in (\ref{startb})
(making them order $q^2$).  The pattern then repeats each time you move to the next relevant degree in cohomology, there will be some new terms which arise
from Proposition \ref{genprop}, say $s$ of them, and another term which cancels the first $s$ coefficient of all other terms containing a $p_1(X)$.

To put things more formally we first need the following.
\begin{lemma}\label{sharp}
Let $\phi\in \M^k$ and put $s_{2j}=\dim \M^{k+2j}$.  Then there is a unique element
$$\phi^{\natural}(z,q)=\sum_{j=0}^{\infty}\chi_{2j}(q)z^{2j}\in \J^+_{\Q,k,1}$$
such that $\chi_0(q)=\phi(q)$ and for $j>0$
$$\chi_{2j}(q) \in q^{s_{2j}}\Q [[q]]\cap\MM^{k+2j}.$$
\end{lemma}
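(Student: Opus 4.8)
The plan is to reduce everything to the parametrization of $\J_{\Q,k,1}$ by sequences of modular forms given in Theorem \ref{jlfseqs} and then to solve a triangular recursion. I would write the sought-after form as $\phi^{\natural}(z,q)=\sum_{n=0}^{\infty}z^{n}\widetilde{\xi_{n}}(z,q)$ with $\xi_{n}\in\M^{k+n}$; since $\M^{k+n}=\{0\}$ whenever $k+n$ is odd, only the even-index forms $\xi_{2i}$ survive, each $\widetilde{\xi_{2i}}$ is even in $z$, and hence $\phi^{\natural}\in\J^{+}_{\Q,k,1}$ automatically. By the converse direction of Theorem \ref{jlfseqs} every such sequence does produce a genuine element of $\J_{\Q,k,1}$, so constructing the sequence is equivalent to constructing $\phi^{\natural}$. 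Expanding each Cohen-Kuznetsov lift with index $\lambda=1$ as in Definition \ref{CKlift} and collecting the coefficient of $z^{2j}$ gives
\begin{equation*}
\chi_{2j}(q)=\xi_{2j}(q)+\sum_{i=0}^{j-1}\frac{\xi_{2i}^{(j-i)}(q)}{(j-i)!\,(k+2i)_{j-i}}\;=:\;\xi_{2j}(q)+R_{2j}(q),
\end{equation*}
where $R_{2j}$ depends only on $\xi_0,\ldots,\xi_{2j-2}$ and lies in $\MM^{k+2j}$ because $D=q\,\tfrac{d}{dq}$ raises weight by $2$ and sends modular forms to quasimodular forms. In particular $\chi_0=\xi_0$, so the requirement $\chi_0=\phi$ forces $\xi_0=\phi$.

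The one nonformal ingredient I would invoke is the standard fact that for each even weight $w$ the truncated $q$-expansion map
\[
\M^{w}\longrightarrow \Q^{\dim\M^{w}},\qquad f\longmapsto\bigl(a_0(f),a_1(f),\ldots,a_{\dim\M^{w}-1}(f)\bigr),
\]
is an isomorphism; equivalently, a weight-$w$ modular form vanishing to order $\dim\M^{w}$ at the cusp is identically zero. This is exactly the existence of the Miller/echelon basis already exploited in Proposition \ref{genprop}. Granting it, I would build the $\xi_{2j}$ by induction on $j$: having fixed $\xi_0=\phi$ and $\xi_2,\ldots,\xi_{2j-2}$, the quasimodular form $R_{2j}\in\MM^{k+2j}$ is determined, and I then demand $\chi_{2j}=\xi_{2j}+R_{2j}\in q^{s_{2j}}\Q[[q]]$, i.e. that the first $s_{2j}=\dim\M^{k+2j}$ Fourier coefficients of $\xi_{2j}+R_{2j}$ vanish. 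This says precisely that the first $s_{2j}$ coefficients of the modular form $\xi_{2j}\in\M^{k+2j}$ must equal the negatives of those of $R_{2j}$, and by the truncation isomorphism there is exactly one such $\xi_{2j}$. Setting $\chi_{2j}=\xi_{2j}+R_{2j}$ then yields $\chi_{2j}\in q^{s_{2j}}\Q[[q]]\cap\MM^{k+2j}$; in the degenerate case $s_{2j}=0$ there is no condition and $\xi_{2j}=0$ is forced.

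Uniqueness then follows by reading the same recursion forward: any $\phi^{\natural}\in\J^{+}_{\Q,k,1}$ meeting the stated conditions has its sequence $\{\xi_{2j}\}$ pinned down step by step, since at each stage $\xi_{2j}$ is the unique modular form whose initial $s_{2j}$ coefficients are prescribed. I expect the main obstacle to be purely bookkeeping rather than conceptual: the crux is verifying that the recursion is genuinely lower triangular, i.e. that $R_{2j}$ involves only the previously chosen $\xi_{2i}$ with $i<j$, which is immediate from the form of the lift in (\ref{jlfck}), and carefully matching the Pochhammer normalization and index $\lambda=1$ against Definition \ref{CKlift} so that $R_{2j}$ is recorded correctly. (As a consistency check I could instead prescribe the $\chi_{2j}$ and use the explicit inversion formula (\ref{f2ns}) of Remark \ref{rmk}, but one would still have to verify that the resulting $\xi_{2j}$ is honestly modular, so the forward recursion above is the cleaner route.)
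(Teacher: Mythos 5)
Your proposal is correct and follows essentially the same route as the paper: both reduce to the parametrization of $\J^+_{\Q,k,1}$ by sequences of modular forms from Theorem \ref{jlfseqs}, set $\xi_0=\phi$, and solve the lower-triangular recursion for $\xi_{2j}\in\M^{k+2j}$ by imposing vanishing of the first $s_{2j}$ Fourier coefficients, which determines each $\xi_{2j}$ uniquely since $\dim\M^{k+2j}=s_{2j}$. Your write-up is in fact somewhat more explicit than the paper's (spelling out the coefficient extraction, the truncation isomorphism, and the uniqueness step), but the underlying argument is identical.
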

\begin{proof}
As in Theorem \ref{jlfseqs}, a sequence of modular forms $\{f_{2\ell}\}_{\ell\geq 0}$ such that $f_{2\ell}\in \M^{k+2\ell}$ uniquely defines an element $\phi^{\natural}(z,q)\in \J^+_{\Q,k,\lambda}$ via
$$\phi^{\natural}(z,q)=\widetilde{f_0}(z, q)+z^2\widetilde{f_2}(z, q)+z^4\widetilde{f_4}(z, q)+...$$
We set $f_0(q)=\phi(q)$ and define $f_{2\ell}(q)$ for $\ell>0$ recursively by requiring that
$$f_{2\ell}(q)+\sum_{n=0}^{j-1}\frac{f_{2n}^{(\ell -n)}(q)}{(\ell-n)!(k+2n)_{\ell -n}}\in q^{s_{2\ell}}\Q[[q]].$$  This
recursive equation has a unique solution for each $j$ since the equation puts $s_{2\ell}$ independent conditions on the $f_{2\ell}$ and $\M^{k+2\ell}$ is $s_{2\ell}$ dimensional.
\end{proof}
\noindent One can see that $0^{\natural}(z,q)=0$; a less trivial example follows.
\begin{ex}
$$E_4^{\natural}(z,q)=\widetilde {E_4}(z,q)-z^8\frac{240}{4! (4)_4}\widetilde {\Delta} (z,q)-z^{12}\left(\frac{240}{6!(4)_6} -\frac{240}{4! (4)_4}\frac{1}{2!(12)_2}\right)\widetilde{\Delta E_4}(z,q)+...$$
\end{ex}
Comparing the example with (\ref{startb}) and (\ref{sch8deg16}) we see that, up to degree $16$, the coefficient of $\nu_0$ in $\sch(\F ; q)$ is the CJLF $\textstyle E_4^{\natural}\left ( \frac{1}{2}p_1(X), q\right)$.
The following theorem asserts that this extends to all degrees in cohomology and that all the other components of the Chern characters of the index bundles of the
various twisted Dirac operators that show up in $\sch (q)$ also have coefficients that are a CJLF $\textstyle f^{\natural}\left ( \frac{1}{2}p_1(X), q\right)$ for some
modular form $f$.

\begin{thm}\label{bigthm}
Let $\F=(\pi, Z, X)$ be a string family of compact spin manifolds where each $Y_x=\pi^{-1}(x)$ has even dimension $m$.
Let  $s_j=\dim \M^{\frac{m}{2}+j}$.  Then
\begin{equation}\label{maineqn}
\Sch(\F ; q)=\frac{q^{m/24}}{\eta(q)^m}\sum_{j=0}^{\infty}\ch_{j}(\indx \dd^{V_0})\phi^{\natural}_{j,0}\left(\frac{p_1(X)}{2},q\right)+...+\ch_{j}(\indx \dd^{V_{s_j-1}})\phi^{\natural}_{j,s_j-1}\left(\frac{p_1(X)}{2},q\right)
\end{equation}
where for each $j$ the collection $\phi_{j,0}(q),...,\phi_{j,s_j-1}(q)$ is given by Proposition \ref{genprop}.
\end{thm}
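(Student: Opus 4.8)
The plan is to peel off the non-modular factor, reduce to a purely modular identity, and only then read off the coefficients. Since $p_1(Z)=0$ forces $p_1(V)=-\pi^*p_1(X)$, the twisting factor pulls back from the base and the projection formula gives
\[
\sch(\F;q)=\int_Y\widehat a(V,q)\,e^{G_2(q)p_1(V)}=e^{-G_2(q)p_1(X)}\,\Psi(q),\qquad \Psi(q):=\int_Y\widehat a(V,q).
\]
The argument in the proof of Theorem \ref{thm31} uses only that $\widehat a(z,\tau)=z/\sigma(z,\tau)$ is modular invariant, so it applies verbatim (without assuming $p_1(X)=0$) to show that the degree-$2d$ component $\Psi_d$ lies in $H^{2d}(X,\M^{m/2+d})$. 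Writing $P:=\tfrac12 p_1(X)$, the theorem is equivalent to the identity $\sch(\F;q)=\sum_{j,i}\ch_j(\indx\dd^{V_i})\,\phi^{\natural}_{j,i}(P,q)$ in $H^*(X,\Q)[[q]]$, which I will prove by first producing \emph{some} such expansion and then identifying its coefficients.

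The key observation is that multiplying through by $e^{G_2(q)p_1(X)}=e^{2G_2(q)P}$ converts everything into a statement about genuine modular forms. A direct computation from the transformation law of $G_2$ shows that the quasimodular anomaly of $e^{2G_2(\tau)z^2}$ exactly cancels the index-$1$ factor in (\ref{jaclike}): for any $F\in\J^+_{\Q,k,1}$ one has $e^{2G_2 z^2}F\in\J^+_{k,0}$, so its $z^{2\ell}$-coefficient is a genuine modular form of weight $k+2\ell$ (this is the mechanism underlying Example \ref{jfexample}). Setting $\psi^{\natural}_{j,i}(z,q):=e^{2G_2 z^2}\phi^{\natural}_{j,i}(z,q)$, with modular coefficients $\psi^{(j,i)}_{2\ell}\in\M^{m/2+j+2\ell}$ and $\psi^{(j,i)}_0=\phi_{j,i}$, the proposed identity becomes, after multiplying by $e^{2G_2P}$,
\[
\Psi(q)=\sum_{j,i}c_{j,i}\,\psi^{\natural}_{j,i}(P,q).
\]
Now every term is modular, so I solve for the $c_{j,i}\in H^{2j}(X,\Q)$ by upward induction on the cohomological degree $2J$. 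The degree-$2J$ part of the right-hand side is $\sum_{\ell\ge0}\sum_{i<s_{J-2\ell}}c_{J-2\ell,i}\,\psi^{(J-2\ell,i)}_{2\ell}\,P^\ell$, whose $\ell\ge1$ blocks involve only lower-degree (hence already known) coefficients; since $\Psi_J$ and each $\psi^{(J-2\ell,i)}_{2\ell}P^\ell$ lie in $H^{2J}(X,\M^{m/2+J})$, the leftover $\Psi_J-\sum_{\ell\ge1}(\cdots)$ is automatically a modular class, and because $\{\phi_{J,i}\}_{i<s_J}$ is a basis of $\M^{m/2+J}$ (Proposition \ref{genprop}) it determines the $c_{J,i}$ uniquely. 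Finite-dimensionality of $X$ makes this a finite recursion, yielding an exact expansion $\sch(\F;q)=\sum_{j,i}c_{j,i}\phi^{\natural}_{j,i}(P,q)$.

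To identify $c_{J,i}=\ch_J(\indx\dd^{V_i})$ I apply $\tfrac{q^{m/24}}{\eta(q)^m}=\prod_n(1-q^n)^{-m}$ and extract the degree-$2J$ part, which equals $\Sch_J(\F;q)$. Here the defining property of the $\natural$-lift (Lemma \ref{sharp}) is decisive: the $z^{2\ell}$-coefficient of $\phi^{\natural}_{J-2\ell,i}$ has weight $m/2+J$ and lies in $q^{s_J}\Q[[q]]$, so all $\ell\ge1$ blocks contribute only at order $q^{s_J}$, while the $\ell=0$ block contributes $\sum_{i<s_J}c_{J,i}\bigl(q^i+\mathcal O(q^{s_J})\bigr)$ via the normalization $\tfrac{q^{m/24}}{\eta(q)^m}\phi_{J,i}=q^i+\mathcal O(q^{s_J})$. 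Thus $\Sch_J(\F;q)=\sum_{i<s_J}c_{J,i}q^i+\mathcal O(q^{s_J})$, and comparing with the tautological expansion $\Sch_J(\F;q)=\sum_{n}\ch_J(\indx\dd^{V_n})q^n$ forces $c_{J,i}=\ch_J(\indx\dd^{V_i})$ for $i<s_J$, completing the proof.

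I expect the anomaly cancellation $e^{2G_2 z^2}\phi^{\natural}_{j,i}\in\J^+_{k,0}$ to be the main obstacle, since it is precisely what keeps the existence recursion inside the modular (rather than merely quasimodular) forms; without it one would be matching a genuinely quasimodular class against the finite span of $\{\phi_{J,i}\}$ and the induction would not close. Concretely, it is the $\Delta$-corrections built into the $\natural$-lifts, already visible in (\ref{sch8deg16}), that cancel the $G_2$-anomaly coming from $e^{-G_2 p_1(X)}$. A secondary subtlety is that $p_1(X)$ is a fixed cohomology class and not a free variable, so the identity cannot be verified power-of-$P$ by power-of-$P$; this is why the whole argument is organized as an equality in $H^*(X,\Q)[[q]]$ graded by cohomological degree, with the modular-form coordinates carrying the bookkeeping.
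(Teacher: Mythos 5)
Your proof is correct, but it takes a genuinely different route from the paper's. The paper's argument is constructive: it forms a Jacobi-like form $\Psi(z,q)$ of index $-p_1/2$, with coefficients in the Pontryagin-class ring, out of a truncation of $\prod_i zy_i/\sigma(zy_i,q)$ times $e^{G_2(q)p_1z^2}$, applies the Cohen--Kuznetsov decomposition of Theorem \ref{jlfseqs} (via the explicit formula (\ref{f2ns})) to extract modular forms $\xi_n$, and then runs a single induction on cohomological degree that simultaneously builds the expansion and identifies its coefficients, exhibiting along the way the intermediate forms $f^{j_0-j}_{j,i}$ that assemble into the $\natural$-lifts. You bypass Theorem \ref{jlfseqs} altogether: multiplying by $e^{G_2(q)p_1(X)}$ turns both sides into honestly modular objects --- the geometric side because the graded pieces of $\int_Y\widehat a(V,q)$ lie in $H^{2d}(X,\M^{m/2+d})$ (the part of the Theorem \ref{thm31} argument that never used $p_1(X)=0$), the algebraic side because $e^{2G_2z^2}$ cancels the index, so that $e^{2G_2z^2}\phi^{\natural}_{j,i}\in\J^+_{m/2+j,0}$ has genuinely modular coefficients (exactly the computation done in the paper's appendix). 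Existence and uniqueness of \emph{some} expansion then follow from a triangular, degreewise linear-algebra recursion over the finite-dimensional spaces $\M^{m/2+J}$ with basis $\{\phi_{J,i}\}$, and the identification $c_{J,i}=\ch_J(\indx\dd^{V_i})$ is the same mod-$q^{s_J}$ matching the paper performs inside its induction, resting on the $q^{s_J}$-divisibility from Lemma \ref{sharp} and the normalization of Proposition \ref{genprop}. The trade-off: the paper's route produces the explicit intermediate modular data (the $\xi_n$ and $f^{j_0-j}_{j,i}$) that tie into the computations of Section \ref{compmotiv} and the appendix, while yours is shorter, cleanly separates existence from identification, and isolates precisely the two properties of the $\natural$-lift --- index cancellation and $q$-adic divisibility --- on which the theorem actually depends.
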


\begin{proof}
%Recall that $\sch(\F ; q)=\int_Y \hat a (V,q)e^{G_2(q)p_1(V)}=\int_Y\prod_{i=1}^{m/2}\exp\left(\sum_{n=1}^{\infty}\frac{2}{2n!}G_{2n}(q)y_i^{2n}\right )$.
Consider the polynomial rings $S=\Q[p_1,...,p_{m/2}]$ in the indeterminates $p_i$ of weight $4i$.  We regard $S$ as a subspace of the polynomial ring $\Q[y_1,...,y_{m/2}]$,
with indeterminates $y_i$ of weight $2i$, via the degree preserving injection $p_i\mapsto \sigma_i(y_1^2,...,y_{m/2}^2)$, where $\sigma_i$ is the $i$th elementary symmetric function.
Using the series expansion of the exponential function one obtains
$$\psi(z,q)=\prod_{i=1}^{m/2}\frac{zy_i}{\sigma(zy_i,q)}=\prod_{i=1}^{m/2}\exp\left(\sum_{n=2}^{\infty}\frac{2}{2n!}G_{2n}(q)(zy_i)^{2n}\right )$$
as an element of $(\Q[[q]]\otimes S)[[z]]$.
From the proof of Theorem \ref{thm31}, we see that $\psi(z,q)$ actually lies in a smaller space.  Namely, let $S=S^0\oplus S^4\oplus...$ be the decomposition
into homogeneous subspaces and set $\textstyle \mathcal R=\bigoplus_{j=0}^{\infty}\M^{2j}\otimes S^{4j}$; then $\psi(z,q)\in \mathcal R[[z]]$ and
the coefficient of $z^n$ is in $\M^{2n}\otimes S^{4n}$.

Let $r\in\{0,2\}$ be the reduction of $m$ modulo $4$.  Expanding
$$\psi(z,q)=1+z^2a_2(q;p_1)+z^4a_4(q;p_1,p_2)+...$$
we define
\begin{align}\label{psijl}
\Psi (z,q)&=\frac{1}{z^{m/2}}\left[\psi (z,q)-\left(1+z^2a_2(q;p_1)+...+z^{(m+r)/2-2}a_{(m+r)/2-2}(q;p_1,...)\right)\right]e^{G_2(q)p_1z^2}\notag\\
&=e^{G_2(q)p_1z^2}\sum_{j=(m+r)/4}^{\infty}a_{2j}(q;p_1,...)z^{2j-m/2}
\end{align}
Notice that the coefficient of $z^n$ in $\textstyle \sum_{j=(m+r)/4}^{\infty}a_{2j}(q;p_1,...)z^{2j-m/2}$ is an element of $\M^{\frac{m}{2}+n}\otimes S^{m+2n}$.
Set $\textstyle \widetilde{\mathcal R}=\bigoplus_{j=0}^{\infty}\MM^{2j}\otimes S^{4j}$; then we have $\Psi(z,q)\in \widetilde{\mathcal R}[[z]]$ and if
%Using the transformation law for $G_2(q)$, see (\ref{G2trans}), it follows that $$\Psi\left(\frac{z}{c\tau+d},\frac{a\tau+b}{c\tau + d}\right)=(c\tau+d)^{m/2}e^{\frac{cz^2}{2\pi i (c\tau+d)}\frac{-p_1}{2}}\Psi (z,\tau)$$
%So that $\Psi(z,\tau)$ is a Jacobi-like form of weight $m/2$ and index \begin{small}$-\frac{p_1}{2}$\end{small}.  We have the Taylor expansion
$$\Psi (z,q)=\chi _0(q;p_1,...)+\chi_1(q;p_1,...)z+\chi_2(q;p_1,...)z^2+...$$
then $\chi_{n}(q;p_1,...)\in \MM^{\frac{m}{2}+n}\otimes S^{m+2n}$.

Using Example \ref{jfexample}, we see from (\ref{psijl}) that for any $(y_1,...,y_{m/2})\in C^{m/2}$ it follows that
$\Psi$ is in either $\J^+_{m/2, -\frac{p_1}{2}}$ or $\J^-_{m/2,-\frac{p_1}{2}}$ depending on whether $r=0$ or $r=2$, respectively.
We can then form the modular combinations as in (\ref{f2ns})
\begin{equation}
\xi_{n}(q;p_1,...)=\sum_{0\leq j \leq n/2}\frac{(\frac{p_1}{2})^j(\frac{m}{2}+n-j-2)!}{j!(\frac{m}{2}+n-2)!}\chi^{(j)}_{n-2j}(q;p_1,...)\in\M^{m/2+n}\otimes S^{m+2n}
%\xi_{2\nu+\frac{r}{2}}(\tau,p_1,...)=\sum_{j=0}^{\nu}\frac{(\frac{p_1}{2})^j((m+r)/2+2\nu-j-2)!}{j!((m+r)/2+2\nu-2)!}\chi^{(j)}_{2\nu-2j}(\tau,p_,...)\in\M^{\frac{m+r}{2}+2\nu}[p_1,...]
\end{equation}
and, as above, $\chi ^{(j)}=\left(q\frac{d}{dq}\right)^j\chi$.  Then as in (\ref{jlfck}) we have
$$\Psi(z,q)=\widetilde {\xi_0}(z,q)+z\widetilde {\xi}_1(z,q)+z^2\widetilde {\xi}_2(z,q)+...$$
where $$\widetilde {\xi}_n(z,q)=\sum_{\nu=0}^{\infty}\frac{(-\frac{p_1}{2})^{\nu}\xi_{n}^{(\nu)}(q;p_1,...)}{\nu!(m/2+n)_{\nu}}z^{2n}\in \widetilde{\mathcal R}[[z]]$$
is the Cohen-Kuznetsov lift of $\widetilde {\xi}_n$ with index \begin{footnotesize}$-\frac{p_1}{2}$\end{footnotesize}.
From this we obtain the important formula
\begin{equation}\label{imptform}
\Psi(z,q)=\sum_{n=0}^{\infty}z^{n}\widetilde \xi_{n}(z,q)=\sum_{\mu=0}^{\infty}\sum_{\nu=0}^{\mu}\frac{\xi^{(\mu-\nu)}_{\nu}(q;p_1,...)}{(\mu-\nu)!(m/2+\nu)_{\mu-\nu}}\left(-\frac{p_1}{2}\right)^{\mu-\nu}z^{2\nu-\mu}
\end{equation}
Now we take $z=1$ and use the identity in (\ref{imptform}) with the $p_i$'s replaced by Pontryagin classes for the vertical bundle $V\rightarrow Z$.
The assumption $p_1(Z)=0$ gives $p_1(V)=-\pi^*p_1(X)$ and since each $a_{2j}(q;p_1,...)$ is degree $4j$ in cohomology we have
$$\int_Y a_{2j}(q;p_1,...)e^{G_2(q)p_1(V)}=e^{-G_2(q)p_1(X)}\int_Y a_{2j}(q;p_1,...)=0$$
for $2j \leq (m+r)/2 -2 $.  Therefore,
$$\int_Y\psi(1,q)e^{G_2(q)p_1(V)}=\int_Y\Psi(1,q)$$
We have thus obtained
\begin{align}\label{xis}
\sch(\F ; q)&=\int_Y \prod_{i=1}^{m/2}\frac{y_i}{\sigma(y_i,q)}e^{G_2(q)y_i^2}=\int_Y\Psi(1,q)\notag\\
&=\int_Y\widetilde {\xi_0}(1,q)+\widetilde {\xi_1}(1,q)+\widetilde {\xi_2}(1,q)+...\in H^*(X,\MM^*)
\end{align}
Note that \begin{footnotesize}$\int_Y$\end{footnotesize}$\xi_{j}(q;p_1,...)\in H^{2j}(X,\M^{\frac{m}{2}+j})$.  Now, we will proceed by induction
to show that for each $j_0$
\begin{equation}\label{induct}
\int_Y \xi_{j_0}(q;p_1,...)=\sum_{j=0}^{j_0}\sum_{i=0}^{s_{j}-1}\ch_{j}(\indx \dd^{V_i})\left(\frac{p_1(X)}{2}\right)^{\lfloor(j_0-j)/2\rfloor}f^{j_0-j}_{j,i}(q)
\end{equation}
where for each $j\leq j_0$ the collection of $\phi_{j,i}(q):=f_{j,i}^{0}(q)\in\M^{\frac{m}{2}+j}$, for $i=1,...,s_j-1$, are given by Proposition \ref{genprop} and $f^{j_0-j}_{j,i}(q)\in\M^{\frac{m}{2}+j_0}$
are defined so that when $j_0-j=2\ell$ they satisfy
$$f^{2\ell}_{j,i}(q)+\sum_{n=0}^{\ell-1}\frac{D^{\ell -n}f^{2n}_{j,i}(q)}{(\ell-n)!(m/2+j+2n)_{\ell -n}}\in q^{s_{j_0}}\Q[[q]].$$
When $j_0-j$ is odd the $f^{j_0-j}_{j,i}$'s might as well be taken to be zero.  The reason for this is that $\sch_j(\F,q)$, $\ch_j(\indx \dd^{V_i})$, and the $\xi_j$ all vanish
for all odd $j$ or all even $j$ depending on whether $r=0$ or $r=2$, respectively.  It is for this reason that the appearance of the floor function
in (\ref{induct}) and the following is not all that significant.  To see why (\ref{induct}) will imply the theorem, we combine it with (\ref{imptform}) and (\ref{xis}) to see
\begin{align*}
\sch_{\leq j_0}(q)&=\left(\int_Y\sum_{j=0}^{\infty}\widetilde \xi_{j}(1,q)\right)_{\leq 2j_0}=\sum_{\mu=0}^{j_0}\sum_{\nu=0}^{\mu}\frac{1}{(\mu-\nu)!(m/2+\nu)_{\mu-\nu}}\left(\frac{p_1(X)}{2}\right)^{\mu-\nu}\int_Y\xi^{(\mu-\nu)}_{\nu}(q,p_1,...)\\
&=\sum_{\mu=0}^{ j_0}\sum_{\nu=0}^{\mu}\sum_{j=0}^{\nu}\sum_{i=0}^{s_{j}-1}\ch_{j}(\indx \dd^{V_i})\left(\frac{p_1(X)}{2}\right)^{\mu-\nu+\lfloor(\nu-j)/2\rfloor}\frac{D^{\mu-\nu}f_{j,i}^{\nu-j}(q)}{(\mu-\nu)!(m/2+\nu)_{\mu-\nu}}
\end{align*}
After setting $2\ell=\nu-j$ and $\beta = \mu-\nu$ this becomes
\begin{align*}
&\sum_{j=0}^{j_0}\sum_{i=0}^{s_{j}-1}\ch_{j}(\indx \dd^{V_i})\sum_{\ell=0}^{\lfloor(j_0-j)/2\rfloor}\left(\frac{p_1(X)}{2}\right)^{\ell}\sum_{\beta=0}^{\lfloor(j_0-j)/2\rfloor-\ell}\frac{D^{\beta}f_{j,i}^{2\ell}(q)}{\beta!(m/2+j+2\ell)_{\beta}}\left(\frac{p_1(X)}{2}\right)^{\beta}\\
&=\sum_{j=0}^{j_0}\sum_{i=0}^{s_{j}-1}\ch_{j}(\indx \dd^{V_i})\sum_{\ell=0}^{\lfloor(j_0-j)/2\rfloor}\left(\widetilde f_{j,i}^{\ell}\left(\frac{p_1(X)}{2},q\right)\right)_{\leq 2(j_0-j-2\ell)}\left(\frac{p_1(X)}{2}\right)^{\ell}\\
&=\sum_{j=0}^{j_0}\ch_{j}(\indx \dd^{V_0})\phi^{\natural}_{j,0}\left(\frac{p_1(X)}{2},q\right)_{\leq 2(j_0-j)}+...+\ch_{j}(\indx \dd^{V_{s_j-1}})\phi^{\natural}_{j,s_j-1}\left(\frac{p_1(X)}{2},q\right)_{\leq 2(j_0-j)}
\end{align*}

For $j=0$, (\ref{induct}) is an equation in $H^0(X,\M^{\frac{m}{2}})$.  As in Proposition \ref{genprop}, one can solve
$\textstyle \sch_0(\F;q)=\int_Y \xi_{0}(q;p_1,...)=\sum_{i=0}^{s_{0}-1}\ch_{0}(\indx \dd^{V_i})\phi_{0,i}(q)$.  This is just the computation of the index
of the Dirac-Ramond operator on $Y$ in terms of the index of the Dirac operator and the indices of the first $s_0-1$ twisted Dirac operators.
%$\xi_0$ is modular and this is the setup for Proposition \ref{genprop}.  $\int_Y \xi_0(\tau,p_1,...)$ breaks up as
%$\ch_{0}(\indx \dd^{V_0})\phi_{j,0}(\tau)+...+\ch_{r}(\indx \dd^{V_{s_j-1}})\phi_{j,s_j-1}(\tau)$.  Note that if $m =2 (\mod 4)$ then both sides are zero.

Now suppose that the formula (\ref{induct}) holds for $\nu < j_0$.  From (\ref{imptform}) and (\ref{xis}) we have
\begin{align}\label{indstep}
\sch_{j_0}(\F, q)&=\int_Y\sum_{\nu=0}^{j_0}\frac{\xi_{j}^{(j_0-\nu)}(q;p_1,...)}{(j_0-\nu)!(m/2+\nu)_{j_0-\nu}}\left(-\frac{p_1}{2}\right)^{j_0-\nu}\\
&=\sum_{\nu=0}^{j_0}\frac{1}{(j_0-\nu)!(m/2+\nu)_{j_0-\nu}}\left(\frac{p_1(X)}{2}\right)^{j_0-\nu}\int_Y\xi^{(j_0-\nu)}_{\nu}(q;p_1,...).\notag
\end{align}
Applying the induction hypothesis (\ref{induct}) to the terms with $\nu< j_0$ gives
\begin{align}\label{gettingsomewhere}
\Upsilon:&=\sch_{j_0}(\F ;q)-\int_Y\xi_{j_0}(q;p_1,...) \notag \\
&=\sum_{\nu=0}^{j_0-1}\frac{1}{(j_0-\nu)!(m/2+\nu)_{j_0-\nu}}\sum_{j=0}^{\nu}\sum_{i=0}^{s_{j}-1}\ch_{j}(\indx \dd ^{V_i})D^{j_0-\nu}f_{j,i}^{\nu-j}(q)\left(\frac{p_1(X)}{2}\right)^{\lfloor(j_0-j)/2\rfloor}\notag\\
%&=\sum_{\nu=0}^{j_0-1}\sum_{i=0}^{s_{\nu}-1}\sum_{j=\nu}^{j_0-1}\ch_{\nu}(\indx \dd ^{V_i})\frac{D^{j_0-j}f_{\nu,i}^{j-\nu}(q)}{(j_0-j)!(m/2+j)_{j_0-j}}\left(\frac{p_1(X)}{2}\right)^{\lfloor(j_0-\nu)/2\rfloor}\\
&=\sum_{j=0}^{j_0-1}\sum_{i=0}^{s_{j}-1}\ch_{j}(\indx \dd ^{V_i})\sum_{\ell=0}^{\lfloor(j_0-1-j)/2\rfloor}\frac{D^{j_0-j-2\ell}f_{j,i}^{2\ell}(q)}{(j_0-j-2\ell)!(m/2+j+2\ell)_{j_0-j-2\ell}}\left(\frac{p_1(X)}{2}\right)^{\lfloor(j_0-j)/2\rfloor}\notag
\end{align}
where in the third line we set $2\ell = \nu-j$.

From the definition we have
$$\Sch_{j_0}(\F ;q)=\sum_{\nu=0}^{\infty}q^{\nu}\ch_{j_0}(\indx \dd ^{V_\nu}).$$
Multiplying this equation by $\frac{\eta (q)^m}{q^{m/24}}$ and combining it with (\ref{gettingsomewhere}) gives
\begin{align*}
\int_Y \xi_{j_0}(q;p_1,...)=\frac{\eta (q)^m}{q^{m/24}}\sum_{\nu=0}^{\infty}q^{\nu}\ch_{j_0}(\indx \dd ^{V_{\nu}})-\Upsilon\in H^{2j_0}(X,\M^{\frac{m}{2}+j_0}).
%&-\sum_{j=0}^{j_0-1}\sum_{i=0}^{s_{j}-1}\ch_{j}(\indx \dd ^{V_i})\sum_{\ell=0}^{\lfloor(j_0-1-j)/2\rfloor}\frac{D^{j_0-j-2\ell}f_{j,i}^{2\ell}(q)}{(j_0-j-2\ell)!(m/2+j+2\ell)_{j_0-j-2\ell}}\left(\frac{p_1(X)}{2}\right)^{\lfloor(j_0-j)/2\rfloor}
\end{align*}
We proceed as in Proposition \ref{genprop} and find a basis for $\M^{\frac{m}{2}+j_0}$
of the form $\{  \phi_{j_0,0}(q),..., \phi_{j_0,s_{j_0}-1}(q)\}$ which satisfies $q^{m/24}\frac{\phi_{j_0,i}(q)}{\eta(q)^m}=q^i+\mathcal O (q^{s_{j_0}})$.
Then $$\frac{\eta (q)^m}{q^{m/24}}\sum_{i=0}^{\infty}q^{i}\ch_{j_0}(\indx \dd ^{V_i})=\ch_{j_0} (\indx \dd ^{V_0})\phi_{j_0,0}(q)+...+\ch_{j_0} (\indx \dd ^{V_{s_{j_0}-1}})\phi_{j_0,s_{j_0}-1}(q)\ \ (\bmod \ q^{s_{j_0}}).$$
Let $\Omega \in H^{2j_0}(X,\M^{\frac{m}{2}+j_0})$ denote the RHS of the previous equation.
Then
\begin{align*}
\left(\frac{\eta (q)^m}{q^{m/24}}\sum_{i=0}^{\infty}q^{i}\ch_{j_0}(\indx \dd ^{V_i})-\Omega\right)-\Upsilon%\sum_{j=0}^{j_0-1}\sum_{i=0}^{s_{j}-1}\ch_{j}(\indx \dd ^{V_i})\sum_{\ell=0}^{j_0-1-j}\frac{D^{j_0-j-\ell}f_{j,i}^{\ell}(q)}{(j_0-j-\ell)!(m/2+j+\ell)_{j_0-j-\ell}}\left(\frac{p_1(X)}{2}\right)^{j_0-j}
\end{align*}
%\begin{align*}
%\int_Y \xi_{2j_0}(\tau,p_1,...)-\left(\ch_{r+4j_0} (\indx \dd ^{V_0})f^0_{j_0,s^0_{j_0}-1}(q)+...+\ch_{r+4j_0} (\indx \dd ^{V_{s^0_{j_0}-1}})f^0_{j_0,s^0_{j_0}-1}(q) \right)
%\end{align*}
is equal to $-\Upsilon$ up to order $q^{s_{j_0}-1}$ as an element of $H^*(X,\Q)[[q]]$.
%Let $g_1,...,g_{s_{j_0-1}}$ be a collection of modular forms and $\omega_1,...,\omega_{s_{j_0-1}}$ be cohomology classes such that
%$$ \Lambda=\sum_{i=0}^{s_{j_0-1}}\omega_{i}g_i(q)=-\Upsilon\in H^{2j_0}(X,\M^{\frac{m}{2}+j_0}) (\bmod q^{s_{j_0}-1}).$$
%Note that
%$\Lambda$ is unique since $\dim \M^{\frac{m}{2}+j_0}=s_{j_0}$.
For each $(j,i)$ we can find an $f^{j_0-j}_{j,i}(q)\in\M^{\frac{m}{2}+j_0}$ such that
$$f^{j_0-j}_{j,i}(q)=-\sum_{\ell=0}^{\lfloor (j_0-1-j)/2\rfloor}\frac{D^{j_0-j-2\ell}f_{j,i}^{2\ell}(q)}{(j_0-j-2\ell)!(m/2+j+2\ell)_{j_0-j-2\ell}} (\bmod \ q^{s_{j_0}})$$
which is uniquely defined since $\M^{\frac{m}{2}+j_0}$ is $s_{j_0}$ dimensional.  Thus, we have
\begin{align*}
\Lambda &:=\sum_{j=0}^{j_0-1}\sum_{i=0}^{s_{j}-1}\ch_{j}(\indx \dd ^{V_i})\left(\frac{p_1(X)}{2}\right)^{j_0-j}f^{j_0-j}_{j,i}(q)\\
&=-\sum_{j=0}^{j_0-1}\sum_{i=0}^{s_{j}-1}\ch_{j}(\indx \dd ^{V_i})\sum_{\ell=0}^{\lfloor(j_0-1-j)/2\rfloor}\frac{D^{j_0-j-2\ell}f_{j,i}^{2\ell}(q)}{(j_0-j-2\ell)!(m/2+j+2\ell)_{j_0-j-2\ell}}\left(\frac{p_1(X)}{2}\right)^{\lfloor(j_0-j)/2\rfloor} (\bmod \ q^{s_{j_0}})
\end{align*}
Now, since $\int_Y \xi_{j_0} = \Omega - \Lambda \ (\bmod \ q^{s_{j_0}})$ and each quantity is in $H^{2j_0}(X,\M^{\frac{m}{2}+j_0})$ we have equality for all orders of $q$, i.e.
\begin{align}
&\int_Y \xi _{j_0} (q,p_1,...)=\sum_{i=0}^{s_{j_0}}\ch_{j_0} (\indx \dd ^{V_i})\phi_{j_0,i}(q)+\sum_{j=0}^{j_0-1}\sum_{i=0}^{s_{j}-1}\ch_{j}(\indx \dd ^{V_i})\left(\frac{p_1(X)}{2}\right)^{\lfloor(j_0-j)/2\rfloor}f^{j_0-j}_{j,i}(q)
\end{align}
and this is the induction step we wanted to show in (\ref{induct}).

\end{proof}

%//XXXXXXXXXXXXXXXXXXX SECTION 4 %/XXXXXXXXXXXXXXXXXXXXXXXXXXXXXXX
\section{The $E_8$ Bundle}
Our formal version of the Dirac-Ramond operator should arise as the restriction of an actual operator on loop space.  And it is believed (see \cite{Brylinski}, for instance)
that the actual operator on loop space should fit into the framework of a $\Diff (S^1)$ equivariant $K$-theory of loop space.  It is desirable then for the index bundle $\textstyle \indx\DD := \sum_{n=0}^{\infty}q^n \indx \dd ^{V_n}$
to be the restriction of a ``Virasoro equivariant" vector bundle on loop space.
A nice class of algebras whose representations also furnish representations for the
Virasoro algebra are affine Lie algebras.  Below we will show that under some stringent conditions we can identify the index bundle with a bundle associated to a representation
of affine $E_8$.

\subsection{Principal $E_8$ Bundles}\label{e8bdlsect}
The homotopy groups for $E_8$ are known to satisfy $\pi_i(E_8)= 0$ for $1 \leq i \leq 14$ except for $\pi_3 (E_8)\simeq \Z$.
%The structure of the homotopy groups has been computed for $E_8$ and it is found that $\pi_i(E_8)\neq 0$ only for $i=3,15,23,27,35,39,47,59$ and equals $\Z$ in every case it is nonzero.
So for $i\leq 15$ the only nonzero homotopy group of $BE_8$ is $\pi_4(BE_8)\simeq \Z$.  This makes $BE_8$ an ``approximate" $K(\Z, 4)$.  That is, for
manifolds $X$ of dimension at most $14$ the cellular approximation theorem gives an isomorphism

\begin{equation}\label{corr}
[X,BE_8]\simeq [X,K(\Z,4)]\simeq H^4(X,\Z).
\end{equation}
This has the effect that, in low dimensions, principal $E_8$ bundles over $X$ are in bijective correspondence with the elements of its fourth cohomology.
Let $P\rightarrow X$ be a principal $E_8$ bundle over $X$.  If $EE_8\rightarrow BE_8$ denotes the universal principal $E_8$ bundle over the classifying space
$BE_8$, then $P=\gamma^*EE_8$ for some $\gamma :X\rightarrow BE_8$.  The bijective correspondence in (\ref{corr}) associates the principal bundle $P$ to the cohomology class
$\omega_{\gamma} = \gamma^*(u)$ where $u$ is the generator of
$H^4(BE_8,\Z)$.

The adjoint representation of $E_8$ is a $248$ dimensional unitary representation.  Let
$\rho : E_8 \rightarrow U(248)$ denote this representation.  In fact, if we compose $\rho$ with the determinant map then we get a map from $E_8$ into $U(1)$.  Since $E_8$ is simple, the kernel of this map must be all of $E_8$.  Thus we actually
have image$(\rho)\subset SU(248)$ and we see that $P$ is also a principal $SU(248)$ bundle.  The goal is now to compute the Chern classes of this bundle.

To obtain the Chern classes we need a map $X\rightarrow BSU(248)$.  The representation $\rho$ induces a map $B\rho : BE_8 \rightarrow BSU(248)$.  The map we need then is
given by the composition $B\rho \circ \gamma : X \rightarrow BE_8 \rightarrow BSU(248)$.  Since $H^2(BSU(248),\Z)=0$ we trivially have $c_1(P)=0$.  Now,
$$c_2(P)= (B\rho \circ \gamma)^*(c_2)=\gamma^*B\rho^*c_2.$$
Since $H^4(BSU(248),\Z)$ and $H^4(BE_8,\Z)$ are both canonically isomorphic to $\Z$, any homomorphism between them is determined by a single integer.  The integer
induced by the adjoint representation is known as the Dynkin index of $E_8$ and has been computed to be $60$ (see \cite{Totaro} and references therein).  We restate all this in the following proposition.
\begin{prop}
Let $\rho:E_8 \rightarrow SU(248)$ be the adjoint representation and $c_2$ and $u$ be the generators of $H^4(BSU(248),\Z)$ and $H^4(BE_8,\Z)$, respectively.  Then
\begin{align}
B\rho^*:H^4(BSU(248),\Z) &\rightarrow H^4(BE_8,\Z)\notag \\
c_2 &\mapsto 60u
\end{align}
\end{prop}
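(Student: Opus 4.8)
The plan is to identify the integer $B\rho^*(c_2)/u$ as the Dynkin index of the adjoint representation of $E_8$, and then to evaluate that index as $2h^\vee$, where $h^\vee$ is the dual Coxeter number. First I would recall that for a compact, simple, simply connected group $G$ the group $H^4(BG,\Z)$ is infinite cyclic, so that any representation $\rho:G\to SU(n)$ induces a map $B\rho^*:H^4(BSU(n),\Z)\to H^4(BG,\Z)$ that is multiplication by a single integer; the whole content is to pin down that integer for $G=E_8$ and $\rho$ the adjoint.

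The next step is to compute this integer rationally by Chern--Weil theory. On an $SU(n)$-bundle one has $c_1=0$ and $c_2=\tfrac{1}{8\pi^2}\operatorname{tr}(F^2)$, so under the Chern--Weil isomorphism $B\rho^*c_2$ is represented by the invariant quadratic polynomial $X\mapsto \tfrac{1}{8\pi^2}\operatorname{tr}_{\C^n}\bigl(\rho(X)^2\bigr)$ on the Lie algebra $\mathfrak{g}$. Since $\mathfrak{g}$ is simple, every invariant quadratic form is a scalar multiple of the basic invariant form $\langle\cdot,\cdot\rangle$ normalized so that long roots have squared length $2$; writing $\operatorname{tr}_{\C^n}(\rho(X)^2)=I_\rho\,\langle X,X\rangle$ defines the Dynkin index $I_\rho$, and the defining representation of $SU(n)$ has $I=1$. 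Hence $B\rho^*c_2=I_\rho\,u$ with $u$ the generator dual to $\langle\cdot,\cdot\rangle$, and it remains to evaluate $I_{\mathrm{ad}}$. By definition the trace form of the adjoint representation is the Killing form, and the Killing form equals $2h^\vee$ times the basic form, so $I_{\mathrm{ad}}=2h^\vee$. For $E_8$ one has $h^\vee=30$ (equal to the Coxeter number $h$ since $E_8$ is simply laced; a quick check is $\dim E_8=248=\rk\cdot(h+1)=8\cdot 31$, using that the number of roots is $\rk\cdot h=240$), giving $I_{\mathrm{ad}}=60$.

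The delicate point, and the one I expect to be the main obstacle, is integrality and normalization: the Chern--Weil computation only determines the answer over $\Q$, so one must know that $u$ genuinely generates $H^4(BE_8,\Z)$ and that $\langle\cdot,\cdot\rangle$ is normalized so that $I_\rho$ comes out as exactly the claimed integer. I would settle this by restricting along the homomorphism $SU(2)\to E_8$ attached to a long coroot. This inclusion has embedding index $1$, so it carries the generator of $H^4(BE_8,\Z)$ to the generator of $H^4(BSU(2),\Z)$, and because $\pi_3(E_8)\cong\Z$ is generated from this $SU(2)$ the integral class $u$ is thereby fixed. Under this restriction the pullback $B\rho^*c_2$ becomes the sum of the $SU(2)$-Dynkin indices of the irreducible summands occurring in the decomposition of the $248$-dimensional adjoint representation under $SU(2)$, where the spin-$j$ irreducible has index $\tfrac{2}{3}j(j+1)(2j+1)$. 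Carrying out this finite decomposition and summing recovers the value $60$, confirming both the result and its integrality and matching the computation cited in \cite{Totaro}.
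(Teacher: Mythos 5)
Your proposal is correct, and it goes further than the paper does. The paper's own argument consists of exactly your first step: observing that $H^4(BSU(248),\Z)$ and $H^4(BE_8,\Z)$ are both infinite cyclic, so $B\rho^*$ is multiplication by a single integer, identifying that integer as the Dynkin index of the adjoint representation, and then simply quoting the value $60$ from the literature (\cite{Totaro}). You make the same identification but then actually derive the value: the trace form of the adjoint representation is the Killing form, which is $2h^\vee$ times the basic form, so $I_{\mathrm{ad}}=2h^\vee=60$ since $h^\vee=h=30$ for $E_8$; and you handle the normalization/integrality issue (that the basic form really does correspond to the integral generator $u$) by restricting along a long-root $SU(2)$, which has embedding index one and generates $\pi_3(E_8)$. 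That verification does work, and the one step you leave to the reader is easy to supply: the long-root $SU(2)$ sits inside the subgroup $SU(2)\times E_7\subset E_8$, under which the adjoint decomposes as $(3,1)\oplus(1,133)\oplus(2,56)$, i.e.\ one spin-$1$, $133$ spin-$0$'s, and $56$ spin-$\tfrac{1}{2}$'s, so your index formula gives $4+56\cdot 1+133\cdot 0=60$; you should spell this out rather than assert it. As for what each route buys: the paper's citation is shorter, and Totaro's paper is needed anyway for the character expansion (\ref{e8chars}) used immediately afterwards, whereas your argument is self-contained, proves the general statement $I_{\mathrm{ad}}=2h^\vee$ for any compact simple simply connected group, and makes explicit the normalization of the generator $u$ that the paper leaves implicit.
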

\noindent We now see that
$$c_2(P)= 60\omega^*(u).$$
It is well known that when $X$ is spin, $p_1(X)$ is even.  Since we then have \begin{footnotesize}$\frac{p_1(X)}{2}$\end{footnotesize}$\in H^4(X,\Z)$,
we can choose $\omega_{\gamma} = -$\begin{footnotesize}$\frac{p_1(X)}{2}$\end{footnotesize} and therefore $c_2(P)=-30p_1(X)$.  Let $W=P\times_{\rho}\C^{248}$ be the complex vector bundle over $X$ associated to the representation $\rho$.
The Chern character of $W$ is the same as the Chern character for $P$, since $W$ can also be viewed as being associated to $P$ as a $SU(248)$ bundle using the standard representation.  Working with
formal Chern variables $x_1,...,x_{248}$ we see
$$x_1^2+...+x_{248}^2=(x_1+...+x_{248})^2-2\sum_{i<j}x_ix_j=c_1(W)^2-2c_2(W)=-2c_2(W)$$
and thus
$$\ch_2(W)=\frac{1}{2}(x_1^2+...+x_{248}^2)=-c_2(W)=30p_1(X).$$

It is a result of Atiyah-Hirzebruch \cite{ATHIRZ} that for any compact simple group $G$, the (completed) representation ring $R(G)$ is isomorphic to
$K(BG)$.  Invoking the Chern character gives the isomorphism $R(E_8)\otimes \Q \simeq H^*(BE_8,\Q)$.  Under this identification a representation $\Lambda$
is identified with $\ch (EE_8\times _{\Lambda}\C^r)$ where $r$ is the dimension of the representation $\Lambda$.
From some character calculations in \cite{Totaro} we can see then
\begin{align}\label{e8chars}
\ch (EE_8\times_{\rho}\C^{248})&=248+60u+6u^2+...
\end{align}
 Using $\gamma$ to pull back to $X$ we obtain,
\begin{equation}\label{e8w8}
\ch (W) = 248 +30p_1(X)+\frac{3}{2}p_1(X)^2+...
\end{equation}

\subsection{The Basic Representation of Affine $E_8$}
Let $\g$ be a complex finite-dimensional simple Lie algebra and $\langle \cdot,\cdot \rangle$ be the Killing form on $\g$.  We will also take $\g$ to be simply laced, i.e. of type $A_n$,$D_n$, or $E_n$.
The affine Lie algebra $\widehat{\g}$ corresponding to $\g$ is
$$\widehat{\g}=\C[t,t^{-1}]\otimes \g \oplus \C K \oplus \C d$$
where $d=t\frac{d}{dt}$ and the bracket is defined by
\begin{align*}
&[f(t)\otimes X+\alpha K + \mu d,g(t)\otimes Y+\beta K + \nu d]\\
=&f(t)g(t)\otimes [X,Y]+\langle X,Y\rangle \text{Res }_{t=0}\left(\frac{df}{dt}(t)g(t)\right)K + \mu (dg)(t)\otimes Y - \nu (df)(t)\otimes X.
\end{align*}
Via the identification $t=e^{i\theta}$ the first summand appearing in $\widehat{\g}$ can be thought of as loops on $\g$ with finite Fourier expansion.  Writing $G$ for the compact
simply connected Lie group corresponding to $\g$, these loops
exponentiate to polynomial loops on $G$.  The second summand provides a central extension of this algebra.
In terms of $\theta$, the operator $d$ is $-i$\begin{footnotesize}$\frac{d}{d\theta}$\end{footnotesize}.  This operator exponentiates to rigid rotations on the circle and so the third
summand provides a semidirect product with the infinitesimal generator of such transformations.
Of course, $\g$ is a subalgebra via the identification of $\g$ with $1\otimes \g$.

Of all the irreducible representations of $\widehat{\g}$ there is a nontrivial one which is simplest in some ways.
This representation $V(\Lambda_0)$ is known as the basic representation and contains a highest weight vector $v_0$ satisfying
$$Kv_0=v_0 \text { and } \left(\C[t]\otimes \g  \oplus \C  d\right)v_0=0.$$
Let $W_{n}=\{v\in V(\Lambda_0) | dv = -nv\}$.  Since $[\g, d]=0$ each $W_n$ is a representation for $\g$.
The character of the basic representation $V(\Lambda_0)$ is given by (see for instance \cite{KacBook}, or more in the case at hand \cite[equation (2)]{Kac})
\begin{equation}\label{charbas}
\text{char } (V(\Lambda_0))(q,z_1,...,z_r)=q^{r/24}\frac{\Theta_{\mathfrak g}(q,z_1,...,z_r)}{\eta(q)^r}
\end{equation}
where $r$ is the rank of $\mathfrak g$, $(z_1,...,z_r)$ represents a point (after choosing a basis) in the Cartan subalgebra, and the theta function is defined on the root lattice $Q$ by

$$\Theta_{\mathfrak g}(q,z_1,...,z_r)=\sum_{\gamma \in Q}e^{2\pi i \langle \gamma, \vec{z}\rangle}q^{||\gamma ||^2/2}.$$
We now specify all of this to the case where $\mathfrak g = E_8$.
The representation $V(\Lambda_0)$
breaks up in terms of the $W_n$'s as a sequence of finite dimensional representations for $E_8$ (the algebra or the group) as
\begin{equation}\label{lvl1}
V(\Lambda_0)=1+W_1 q + W_2 q^2+...
\end{equation}
where $1$ denotes the trivial one-dimensional representation.  It is a fact that $W_1$ is the adjoint representation $\rho$ for $E_8$.
Recall from the previous section there is an $E_8$ bundle $P$ over $X$ corresponding to the cohomology class $-$\begin{footnotesize}$\frac{p_1(X)}{2}$\end{footnotesize}.
For each $n>0$ define the associated vector bundles $\underline{W}_n=P\times_{\rho_n}W_n$ over $X$ and put
\begin{equation}\label{Vdefn}
\mathcal{V}=1_{\C}+\underline{W}_1 q + \underline{W}_2 q^2+...\in K(X)[[q]]
\end{equation}
where $1_{\C}\rightarrow X$ is the trivial one-dimensional complex vector bundle.

Using (\ref{e8w8}) we see that
\begin{equation}\label{e8w82}
\ch(\mathcal V)=1+(248 +30p_1(X))q+...
\end{equation}
and the rest of the terms are at least degree $8$ in cohomology or at least degree $2$ in $q$.

In \cite{Gannon} it is shown that there is a basis for the $E_8$ root lattice such that
\begin{equation}
\Theta_{E_8}(q,z_1,...,z_8)=\frac{1}{2}\left(\prod_{i=1}^{8}\theta_2(2\pi iz_i,q)+\prod_{i=1}^{8}\theta_3(2\pi iz_i,q)+\prod_{i=1}^{8}\theta_4(2\pi iz_i,q)+\prod_{i=1}^{8}\theta(2\pi i z_i,q)\right)
\end{equation}
where $\theta_2(z,q),\theta_3(z,q),\theta_4(z,q)$ are the other three classical (even) Jacobi theta functions whose definitions are given in the appendix.
Now, let $$H(\tau,z_1,...,z_8)=\frac{1}{2}e^{G_2(\tau)(z_1^2+...+z_8^2)}\left(\prod_{i=1}^{8}\theta_2(z_i,\tau)+\prod_{i=1}^{8}\theta_3(z_i,\tau)+\prod_{i=1}^{8}\theta_4(z_i,\tau)\right).$$
Notice that since $\theta(0,\tau)=0$, it follows that $\theta(z,\tau) = \mathcal O (z)$ and hence
$$\Theta_{E_8}(\tau,z_1,...,z_8)=e^{-G_2(\tau)((2\pi i z_1)^2+...+(2\pi i z_8)^2)}H(\tau, 2\pi i z_1,...,2\pi i z_8) + \mathcal O (z^8).$$
Using the classical transformation formulas for the Jacobi theta functions and for $G_2$ one sees that
\begin{equation}\label{thetatrans}
H\left(\frac{a\tau+b}{c\tau+d},\frac{z_1}{c\tau+d},...,\frac{z_8}{c\tau+d}\right)=(c\tau+d)^4H(\tau,z_1,...,z_8).
\end{equation}
Let $e_i$ be the $i$th elementary symmetric polynomial in $z_1^2,...,z_8^2$.
Since each theta function in $H(\tau,z_1,...,z_8)$ is even, it can be expanded in terms of the elementary symmetric functions $e_i$'s
$$H(\tau,z_1,...,z_8)=a_0(\tau)+a_{1,1}(\tau)e_1+a_{2,1}(\tau)e_1^2+a_{2,2}(\tau)e_2+...$$
It follows from (\ref{thetatrans}) that each $a_{i,j}$ is a modular form of weight $4+2i$.  For $i<4$, the space of modular forms of weight $4+2i$ is one dimensional.  So
to determine $a_{i,j}$ one need only calculate its constant term.  This can be done very easily using Mathematica.  One finds
\begin{equation}
H(\tau,z_1,...,z_8)=E_4(\tau)-\frac{1}{12}E_6(\tau)\frac{1}{2}e_1+\frac{1}{2!\cdot12^2}E_8(\tau)(\frac{1}{2}e_1)^2-\frac{1}{3!\cdot12^3}E_{10}(\tau)(\frac{1}{2}e_1)^3+\mathcal O (z^8).
\end{equation}
Then
\begin{align}\label{thetachars}
&\frac{1}{2}\left(\prod_{i=1}^{8}\theta_2(z_i,\tau)+\prod_{i=1}^{8}\theta_3(z_i,\tau)+\prod_{i=1}^{8}\theta_4(z_i,\tau)\right) \notag\\
&=e^{-G_2(\tau)e_1}\left(E_4(\tau)-\frac{1}{12}E_6(\tau)\frac{1}{2}e_1+\frac{1}{2!\cdot12^2}E_8(\tau)(\frac{1}{2}e_1)^2-\frac{1}{3!\cdot12^3}E_{10}(\tau)(\frac{1}{2}e_1)^3\right)+\mathcal O (z^8)\notag\\
&=E_4(\tau)+\frac{1}{4}E_4'(\tau)\left(\frac{e_1}{2}\right)+\frac{1}{2!\cdot 4\cdot 5}E_4''(\tau)\left(\frac{e_1}{2}\right)^2+\frac{1}{3!\cdot 4\cdot 5\cdot 6}E_4'''(\tau)\left(\frac{e_1}{2}\right)^3+\mathcal O (z^8)
\end{align}
Using (\ref{charbas}) together with (\ref{thetachars}) we get
\begin{equation}\label{b4pullback}
\ch (V(\Lambda_0)) = \frac{q^{1/3}}{\eta(q)^8}\left(E_4(\tau)+\frac{1}{4}E_4'(\tau)\left(\frac{e_1}{2}\right)+\frac{1}{2!\cdot 4\cdot 5}E_4''(\tau)\left(\frac{e_1}{2}\right)^2+\frac{1}{3!\cdot 4\cdot 5\cdot 6}E_4'''(\tau)\left(\frac{e_1}{2}\right)^3\right)+\mathcal O (z^8)
\end{equation}
With the identification $R(E_8)\otimes \Q \simeq H^*(BE_8,\Q)$, $\ch(V(\Lambda_0))$ is an element of $H^*(BE_8,\Q)[[q]]$.  Proceeding as in
the proof of Theorem \ref{thm31} one could see further that \begin{footnotesize}$\frac{\eta(q)^8}{q^{1/3}}$\end{footnotesize}$\ch (V(\Lambda_0))\in H^*(BE_8,\MM^*)$.
Now we use the map $\gamma :X\rightarrow BE_8$ corresponding to $-$\begin{footnotesize}$\frac{p_1(X)}{2}$\end{footnotesize} (see Section \ref{e8bdlsect}) to pull
back (\ref{b4pullback}) to $H^*(X,\Q)[[q]]$ and compare the degree $4$ element of cohomology appearing as the coefficient of $q$ with that in (\ref{e8w8}).  We see that $\gamma^*e_1=p_1(X)$.  Thus
\begin{align}\label{charRHS}
\ch(\mathcal{V})=\frac{q^{1/3}}{\eta(q)^8} \{&E_4(q)+\frac{1}{4}E_4'(q)\left(\frac{p_1(X)}{2}\right)\\
&+\frac{1}{2!\cdot 4\cdot 5}E_4''(q)\left(\frac{p_1(X)}{2}\right)^2+\frac{1}{3!\cdot 4\cdot 5\cdot 6}E_4'''(q)\left(\frac{p_1(X)}{2}\right)^3+...\in H^*(X,\Q)[[q]]\notag
\end{align}
An application of Theorem \ref{bigthm}, or more directly (\ref{start}), now gives the following result.
\begin{thm}
Let $\F=(\pi, Z, X)$ be a string family of compact spin manifolds having fibers $Y_x=\pi^{-1}(x)$ of dimension $8$.  Suppose also that $X$ is a compact spin manifold of dimension less
than $16$.  If $\ch_2(\indx \dd)=\ch_4(\indx \dd)=\ch_{6}(\indx \dd)=0$
then the Chern character of the index bundle for the family of Dirac-Ramond operators satisfies
\begin{equation}
\Sch(\F ; q)=\ch (\C^{\nu_0}\otimes \mathcal{V})
\end{equation}
where $\nu_0$ is the index of the Dirac operator on $Y$ and $\mathcal{V}\in K(X)[[q]]$ is constructed as above.
\end{thm}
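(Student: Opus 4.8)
The plan is to read the theorem off from the two explicit degree-by-degree computations already in hand: the formula (\ref{start}) for $\sch_{\leq 6}(\F;q)$ in the $\dim Y = 8$ case, and the formula (\ref{charRHS}) for $\ch(\mathcal V)$. Both are governed by the Cohen--Kuznetsov lift of $E_4$, and the three vanishing hypotheses are exactly what is needed to make the former collapse onto the latter. No new analytic input is required; the argument is a bookkeeping of cohomological degrees together with a direct comparison of coefficients.

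First I would use the dimension bound to see that (\ref{start}) computes all of $\sch(\F;q)$. Since $\dim X < 16$ we have $H^{2j}(X,\Q)=0$ for $j \ge 8$, so every homogeneous component $\sch_j(\F;q)$ with $j\ge 8$ vanishes; as $m=8$ gives $r=0$, the odd-$j$ components vanish identically (the relevant classes live in degrees divisible by $4$), and the only possibly nonzero pieces are $\sch_0,\sch_2,\sch_4,\sch_6$. Hence $\sch(\F;q)=\sch_{\le 6}(\F;q)$, and the right-hand side of (\ref{start}) is the full class, not a truncation.

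Next I would impose the hypotheses. Setting $\ch_2(\indx\dd)=\ch_4(\indx\dd)=\ch_6(\indx\dd)=0$ in (\ref{start}) annihilates every line except the one carrying $\nu_0$, leaving $\sch(\F;q)=\nu_0\widetilde{E_4}\!\left(\tfrac{p_1(X)}{2},q\right)$, the cohomological Jacobi-like form attached to the index-$1$ Cohen--Kuznetsov lift of $E_4$ (cf. (\ref{startb})). The projection $_{\le 12}$ appearing there is vacuous in our situation, since the next term $(\tfrac{p_1(X)}{2})^4$ sits in cohomological degree $16 > \dim X$ and so vanishes. Converting from $\sch$ to $\Sch$ by multiplying through by $\tfrac{q^{m/24}}{\eta(q)^m}=\tfrac{q^{1/3}}{\eta(q)^8}$ then gives $\Sch(\F;q)=\nu_0\,\tfrac{q^{1/3}}{\eta(q)^8}\widetilde{E_4}\!\left(\tfrac{p_1(X)}{2},q\right)$. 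Comparing this termwise with (\ref{charRHS}) identifies the right-hand side as $\nu_0\,\ch(\mathcal V)$, and since $\ch(\C^{\nu_0}\otimes\mathcal V)=\nu_0\,\ch(\mathcal V)$ the claimed equality $\Sch(\F;q)=\ch(\C^{\nu_0}\otimes\mathcal V)$ follows.

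Because the genuine work has already been discharged in deriving (\ref{start}) and (\ref{charRHS}), the obstacle here is \emph{conceptual} rather than computational: one must be confident that the very same series, the Cohen--Kuznetsov lift of $E_4$, governs both the index bundle of $\DD$ and the character of the basic $E_8$ representation. The structural reason is that $\widehat a(V,q)$ and the $E_8$ theta function (\ref{b4pullback}) each transform as a weight-$4$ Jacobi-like form with constant term $E_4$, and in the one-dimensional spaces $\M^4,\M^6,\M^8,\M^{10}$ this constant term pins down the entire series up to the relevant order. The only care needed in the present proof is the degree count ensuring that no term beyond $(\tfrac{p_1(X)}{2})^3$ survives on $X$, which is precisely the role played by the hypothesis $\dim X < 16$.
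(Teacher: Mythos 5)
Your proposal is correct and takes essentially the same route as the paper, whose proof consists precisely of applying (\ref{start}) (equivalently Theorem \ref{bigthm}) under the vanishing hypotheses and comparing with (\ref{charRHS}); the degree-counting details you make explicit (vanishing of $\sch_j$ for $j\geq 8$ and for odd $j$, and the vacuity of the $\leq 12$ truncation when $\dim X<16$) are exactly the implicit content of that one-line argument.
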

%The fact that the RHS of (\ref{charRHS}) shows up in (\ref{dim6fiber}) allows us also to say something about the case when there is a family of $6$ dimensional spin manifolds.
%However, there is an awkward issue with the factor (\ref{afactor}).  As (\ref{charRHS}) has
%the factor $\Pi_{n=1}^{\infty}(1-q^n)^{-8}$ and (\ref{dim6fiber}) should be multiplied by $\Pi_{n=1}^{\infty}(1-q^n)^{-6}$, we must correct by a factor of
%$\Pi_{n=1}^{\infty}(1-q^n)^{2}$.  Recall from the end of Section \ref{hasdetinit} the determinant line bundle $\det \dd \rightarrow X$.  We use this to state the result as follows.
%\begin{thm}
%Let $Z\rightarrow X$ be a string family of compact spin manifolds having fibers $Y_x=\pi^{-1}(x)$ of dimension $6$.  Suppose also that $X$ is a compact spin manifold of dimension less than $16$.
%If $\ch_3(\indx \dd)=\ch_{5}(\indx \dd)=\ch_{7}(\indx \dd)=0$
%then the Chern character of the index bundle for the family of Dirac-Ramond operators satisfies
%\begin{equation}
%\Sch(q)=c_1(\det \dd)\cdot \ch (\mathcal{V})\prod_{n=1}^{\infty}(1-q^n)^{2}
%\end{equation}
%where $\mathcal{V}\in K(X)[[q]]$ is constructed as above.  If $1_{\C}$ denotes the trivial complex rank one vector bundle over $X$, then this can also be put
%as
%\begin{equation}
%\Sch(q)=\ch \left((\det \dd-1_{\C})\otimes\mathcal{V}\right)\prod_{n=1}^{\infty}(1-q^n)^{2}.
%\end{equation}
%\end{thm}

\appendix

\section{Modular Forms and some Related Functions}
In this appendix, we point out some of our conventions and recall some standard facts.  Most all our definitions of functions are consistent with \cite{HBJ}.  As always,
$q=e^{2\pi i \tau}$.

The (normalized) Eisenstein series of weight $2k$ is given by
\begin{equation}
E_{2k}(q)=1-\frac{4k}{B_{2k}}\sum_{n=1}^{\infty}(\sum_{d|n} d^{2k-1})q^n
\end{equation}
where $B_{2k}$ is the $2k$th Bernoulli number.  For instance,
\begin{align*}
E_2(q)&=1-24q-72q^2-96q^3-...\\
E_4(q)&=1+240q+2160q^2+6720q^3+...\\
E_6(q)&=1-504q-16632q^2-122976q^3-...
\end{align*}
There is also the notable weight $12$ modular form $$\Delta (q) =\frac{E_4(q)^3-E_6(q)^2}{1728}=q-24q^2+252q^3+...$$
The differential operator $D=q\frac{d}{dq}=\frac{1}{2\pi i}\frac{d}{d\tau}$ maps (quasi)modular forms of weight $k$ to quasimodular forms of weight $k+2$.  The following
formulas are useful
\begin{align*}
DE_2(q)&=\frac{E_2(q)^2-E_4(q)}{12}\\
DE_4(q)&=\frac{E_2(q)E_4(q)-E_6(q)}{3}\\
DE_6(q)&=\frac{E_2(q)E_6(q)-E_4(q)^2}{2}\\
D\Delta (q)&=\Delta(q) E_2(q).
\end{align*}
The Dedekind eta function is defined by
$$\eta(q)=q^{1/24}\prod_{n=0}^{\infty}(1-q^n).$$
and satisfies the identity
$$\eta(q)^3=\theta ' (0,q)$$
where $\theta (z,q)$ is the Jacobi theta function
$$\theta (z,q)=2q^{1/8}\sinh (z/2) \prod_{n=1}^{\infty}(1-q^ne^z)(1-q^ne^{-z})(1-q^n).$$
The other three Jacobi theta functions are
\begin{align*}
\theta_2 (z,q)&=2q^{1/8}\cosh (z/2) \prod_{n=1}^{\infty}(1+q^ne^z)(1+q^ne^{-z})(1-q^n)\\
\theta_3 (z,q)&=\prod_{n=1}^{\infty}(1+q^{n-\frac{1}{2}}e^z)(1+q^{n-\frac{1}{2}}e^{-z})(1-q^n)\\
\theta_4 (z,q)&=\prod_{n=1}^{\infty}(1-q^{n-\frac{1}{2}}e^z)(1-q^{n-\frac{1}{2}}e^{-z})(1-q^n).
\end{align*}
Another important elliptic function is the Weierstrass sigma function, which is typically defined by
$$\sigma (z,\tau)=\prod_{0\neq\gamma \in 2\pi i (\Z + \tau \Z)}\left(1-\frac{x}{\gamma}\right)e^{\frac{x}{\gamma}+\frac{1}{2}(\frac{x}{\gamma})^2}.$$
For our purposes, this is not a useful expression.  We make more use of it in the following identities
\begin{align}\label{sigma}
\sigma(z,\tau)&=\frac{\theta(z,\tau)}{\theta ' (0,\tau)}e^{G_2(\tau)z^2}\notag\\
&=z\exp\left(-\sum_{n=2}^{\infty}\frac{2}{2n!}G_{2n}(\tau)z^{2n}\right )
\end{align}
where $G_{2n}(\tau)$ is the (unnormalized) Eisenstein series
$$G_{2k}(\tau)=-\frac{B_{2k}}{4k}E_{2k}(\tau).$$
For instance,$G_2(\tau)=-\frac{1}{24}E_2(\tau)$, $G_4(\tau)=\frac{1}{240}E_4(\tau)$, and $G_6(\tau)=-\frac{1}{504}E_6(\tau)$.
The Eisenstein series $E_2$ is not a modular form.  It is quasimodular satisfying
\begin{equation}\label{E2trans}
E_2\left(\frac{a\tau+b}{c\tau + d}\right)=(c\tau + d)^2E_2(\tau)+\frac{12c}{2\pi i}(c \tau + d)
\end{equation}
and so
\begin{equation}\label{G2trans}
G_2\left(\frac{a\tau+b}{c\tau + d}\right)=(c\tau + d)^2G_2(\tau)-\frac{c}{4\pi i}(c \tau + d).
\end{equation}
The definition of a Jacobi-like form was given in Section \ref{pfsect}.  We want to point out why the coefficients of a Jacobi-like form are modular
when the index $\lambda$ is equal to $0$ and quasimodular when $\lambda \neq 0$.  Let $F(z,\tau)=\sum_{n=0}^{\infty}\chi_{2n}(\tau)z^{2n} \in \mathcal J^+_{k,\lambda}$.
Set $H(z,\tau)=e^{2G_2(\tau)\lambda z^2}F(z,\tau)$.  Then
\begin{align*}
H\left(\frac{z}{c\tau + d},\frac{a\tau+b}{c\tau + d}\right)&=\exp\left(\left((c\tau + d)^2G_2(\tau)-\frac{c}{4\pi i}(c \tau + d)\right)2\lambda(\frac{z}{c\tau + d})^2\right)(c\tau +d)^k \exp\left(\frac{c\lambda}{c\tau +d}\frac{z^2}{2\pi i}\right)F(z,\tau)\\
&=(c\tau +d)^k H(z,\tau).
\end{align*}
Now expand $H(z,\tau)$ as a power series $\sum_{n=0}^{\infty} \widetilde \chi_{2n} (\tau) z^{2n}$.  The previous equation becomes
$$\sum_{n=0}^{\infty} \widetilde \chi_{2n} \left(\frac{a\tau+b}{c\tau + d}\right) \left(\frac{z}{c\tau + d}\right)^{2n}=(c\tau +d)^k\sum_{n=0}^{\infty} \widetilde \chi_{2n} (\tau) z^{2n}$$
so that each $\widetilde {\chi}_{2n}$ is a modular form of weight $k+2n$.  Then $F(z,\tau)=e^{-2G_2(\tau)\lambda z^2}\sum_{n=0}^{\infty} \widetilde \chi_{2n} (\tau) z^{2n}$, and
it follows from this that each $\chi_{2n}$ is a quasimodular form of weight $k+2n$.

Given $F(z,\tau)=\sum_{n=0}^{\infty}\chi_{2n}(\tau)z^{2n}\in \mathcal J_{0,\lambda}^+$ then $\chi_0(\tau)$ is a modular form of weight $0$ and hence
constant.  Assume $\chi_0(\tau)=1$.  Then $\prod_{i=1}^{m/2}F(y_i,\tau)$ is expressible in terms of the elementary symmetric functions $p_1,...,p_{m/2}$
in the variables $y_1^2,...,y_{m/2}^2$.  It follows then that if we write
\begin{equation}\label{expand}
\prod_{i=1}^{m/2}F(y_i,\tau)=1+a_{1,1}(\tau)p_1+a_{2,1}(\tau)p_1^2+a_{2,2}(\tau)p_2+...
\end{equation}
then each $a_{i,j}(\tau)$ is a modular form of weight $2i$ if $\lambda=0$ and a quasimodular form of the same weight otherwise.

\section{The Computation}
In this section we show how the computations in section \ref{compmotiv} were done and hope to elucidate the proof in section \ref{pfsect}.  We
assume the setup from the previous sections.  Namely, we have a string family $Z\rightarrow X$ parameterizing the compact spin manifolds $Y_x=\pi^{-1}(x)$  and
$V\rightarrow Z$ is the vertical bundle.  The string condition on $Z$ allows us to make much use of (\ref{p1n}).
For simplicity, we will restrict to when the dimension of $Y$ is $8$.  All, computations below were done with the help of
Mathmematica.

We recall the formula for the $\widehat A$-class.  For a vector bundle $V$ with Pontryagin classes $p_1(V),...,p_{m/2}(V)$ we have
\begin{align}\label{ahat}
\hat A (V)&=1-\frac{1}{24}p_1(V)+\frac{7p_1(V)^2-4p_2(V)}{5760}+\frac{-31p_1(V)^3+44p_1(V)p_2(V)-16p_3(V)}{967680}\\
&+\frac{381p_1(V)^4-904p_1(V)^2p_2(V)+208p_2(V)^2+512p_1(V)p_3(V)-192p_4(V)}{464486400}+... \notag
\end{align}
We will start with the case when dim $Y=8$.  Then by the usual Atiyah-Singer index theorem for families of Dirac operators we have
\begin{align}\label{classes}
\ch_0 (\indx \dd)&=\int_Y -\frac{4p_2(V)}{5760}\\
\ch_2 (\indx \dd)&=\int_Y \frac{44p_1(V)p_2(V)-16p_3(V)}{967680}\notag\\
&\vdots \notag
\end{align}
Recall that in the notation above we have $V_1=V_{\C}$.  Some manipulations with formal Chern variables show
\begin{align}
\ch (V_{\C})&=8+p_1(V)+\frac{p_1(V)^2-2p_2(V)}{12}+\frac{p_1(V)^3-3p_1(V)p_2(V)+3p_3(V)}{360}\\
&+\frac{p_1(V)^4-4p_1(V)^2p_2(V)+2p_2(V)^2+4p_1(V)p_3(V)-4p_4(V)}{20160}+...\notag
\end{align}
After multiplying this by (\ref{ahat}), the index theorem (\ref{ordfamthm}) gives
\begin{align*}
\ch_0(\indx \dd ^{V_1})&=\int_Y -\frac{31p_2(V)}{180}\\
\ch_2(\indx \dd ^{V_1})&=\int_Y \frac{-13 p_1(V)p_2(V) + 62 p_3(V)}{7560}\\
&\vdots
\end{align*}
Notice that if $p_1(X)=0$ then $p_1(V)=-\pi^*p_1(X)=0$ and then $\ch_0(\indx \dd^{V_1})=248 \ch_0 (\indx \dd)$ and
$\ch_2(\indx \dd^{V_1})=-496 \ch_2 (\indx \dd)$ agreeing with (\ref{ch0s}) and (\ref{ch4s}), respectively.  However,
when $p_1(V)=-\pi^*p_1(X)\neq 0$ we have
\begin{align*}
\ch_2(\indx \dd^{V_{\C}})&=\int_Y-496\frac{44p_1(V)p_2(V)-16p_3(V)}{967680} +30\frac{-4p_2(V)}{5760}(-p_1(V)) \\
&=-496\int_Y\frac{44p_1(V)p_2(V)-16p_3(V)}{967680}+30p_1(X)\int_Y-\frac{4p_2(V)}{5760}\\
&=-496\ch_2 (\indx \dd)+30\ch_0 (\indx \dd)p_1(X)
\end{align*}
agreeing with (\ref{p1not0}).

The equation (\ref{start}) follows from Theorem \ref{bigthm}.  Here we will also show how it follows from direct computation.  First, recall
\begin{align*}
\widehat a(V,\tau)e^{G_2(\tau)p_1(V)}&=\prod_{i=1}^{m/2}\frac{y_i}{\sigma(y_i,\tau)}e^{G_2(\tau)y_i^2}\\
&=\prod_{i=1}^{m/2}\exp(\sum_{n=1}^{\infty}\frac{2}{2n!}G_{2n}(\tau)y_i^{2n})=\exp\left(\sum_{n=1}^{\infty}\frac{2}{2n!}G_{2n}(\tau)(y_1^{2n}+...+y_{m/2}^{2n})\right)
\end{align*}
and after making use of the Newton identities which express the power sums in the basis of elementary symmetric polynomials this can be written
\begin{align}
\widehat a (V,\tau)&e^{G_2(\tau)p_1(V)}\\
&=\exp\left(-\frac{E_2(q)}{24}p_1(V)+\frac{E_4(q)}{2880}(p_1(V)^2-2p_2(V))+\frac{E_6(q)}{181440}(p_1(V)^3-3p_1(V)p_2(V)+3p_3(V))+...\right)\notag
\end{align}
We will write $p_i$ for $p_i(V)$ in the following.  Expanding this out we get something of the form
\begin{align*}
\widehat a (V,\tau)&e^{G_2(\tau)p_1(V)}=F(p_1,q)-\frac{E_4(q)p_2}{1440}+\frac{84E_2(q)E_4(q)p_1p_2+48E_6(q)p_1p_2-48E_6(q)p_3}{2903040}+...
\end{align*}
where $F(p_1,q)$ is some expression only depending on powers of $p_1$.  Hence,
\begin{align*}
\sch_{\leq 4} (q)&=\int_Y \widehat a (V,\tau)e^{G_2(\tau)p_1(V)}=\int_Y -\frac{E_4(q)p_2}{1440}+\frac{84E_2(q)E_4(q)p_1p_2+48E_6(q)p_1p_2-48E_6(q)p_3}{2903040}\\
&=\int_Y-\frac{p_2}{1440}\left(E_4(q)+\frac{1}{4}\frac{E_2(q)E_4(q)-E_6(q)}{3}(-\frac{p_1}{2})\right)+\frac{44p_1(V)p_2(V)-16p_3(V)}{967680}E_6(q)\\
&=\ch_0 (\indx \dd)\left(E_4(q)+\frac{1}{4}DE_4(q)\left(\frac{p_1(X)}{2}\right)\right)+\ch_2 (\indx \dd)E_6(q).
\end{align*}
Continuing this to higher degrees in cohomology gives (\ref{start}).

\subsection*{Acknowledgements}
I am thankful to my PhD advisor Orlando Alvarez for suggesting
this problem and for all of his guidance along the way. I am also
thankful to Nikolai Saveliev for many helpful discussions and
corrections to earlier versions of this paper. Many thanks also go
to Anatoly Libgober for also going through the earlier versions
and for many valuable comments.

\vspace{10mm}
\bibliographystyle{plain}
\bibliography{myrefs}

\end{document}